\begin{document}

\author{Lars Simon}
\address{Lars Simon, Department of Mathematical Sciences, Norwegian University of Science and Technology, Trondheim, Norway}
\email{larsimon@gmail.com}

\title{A Parameter Version of Forstneri\v{c}'s Splitting Lemma}

\subjclass[2010]{Primary 32H02.  Secondary 32W05.}
\keywords{Compositional Splitting, Biholomorphic Map, Parameter Dependence.}

\begin{abstract}
We construct solution operators to the $\overline{\partial}$-equation that depend continuously on the domain. This is applied to derive a parameter version of Forstneri\v{c}'s splitting lemma: If both the maps and the domains they are defined on vary continuously with a parameter, then the maps obtained from Forstneri\v{c}'s splitting will depend continuously on the parameter as well.
\end{abstract}

\maketitle

\section{Introduction}
The well-known splitting lemma for biholomorphic maps  close to the identity by Forstneri\v{c} \cite[Theorem 8.7.2 on p.\ 359]{FF} says the following:

\theoremstyle{plain}
\newtheorem*{FFtheorem}{Theorem}
\begin{FFtheorem}
\label{FFtheorem}
Let {\em{$\text{dist}$}} be a distance function induced by a smooth Riemannian metric on a complex manifold $X$, let $(A,B)$ be a Cartan pair in $X$ and let $\widetilde{C}$ be an open subset of $X$ containing $C=A\cap{B}$. Then there exist open subsets $A'$, $B'$ and $C'$ of $X$, with $A\subseteq{A'}$, $B\subseteq{B'}$ and $C\subseteq{}C'\subseteq{A'\cap{B'}}\subseteq{\widetilde{C}}$, satisfying the following:

For every $\eta>0$ there exists an $\epsilon_\eta>0$ such that for every injective holomorphic map $\gamma\colon{}\widetilde{C}\to{X}$ with {\em{$\text{dist}_{\widetilde{C}}(\gamma,\text{Id})<\epsilon_\eta$}} there exist injective holomorphic maps $\alpha\colon{}A'\to{X}$ and $\beta\colon{B'}\to{X}$ with the following properties:
\begin{itemize}
\item{$\alpha$ and $\beta$ depend continuously on $\gamma$,}
\item{$\gamma=\beta\circ{}{\alpha}^{-1}$ on $C'$,}
\item{{\em{$\text{dist}_{A'}(\alpha,\text{Id})<\eta$}},}
\item{{\em{$\text{dist}_{B'}(\beta,\text{Id})<\eta$}}.}
\end{itemize}
If $\mathcal{F}$ is a nonsingular holomorphic foliation on $X$ and $\gamma$ is an $\mathcal{F}$-map on $\widetilde{C}$, then $\alpha$ and $\beta$ can be chosen to be $\mathcal{F}$-maps on $A'$ resp.\ $B'$. If furthermore $X_0$ is a closed complex subvariety of $X$ that does not meet $C$, then we can choose $\alpha$ and $\beta$ to be tangent to the identity map to any finite order along $X_0$.
\end{FFtheorem}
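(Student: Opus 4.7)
My plan is to follow the standard strategy of reducing nonlinear compositional splitting to a linear Cousin-type problem solved via $\bar\partial$, and then promoting the linear solution to a nonlinear one by a contraction-mapping argument. First I would fix smaller open sets $A\subset A'$, $B\subset B'$, and $C\subset C'\subset A'\cap B'\subset\widetilde{C}$, chosen so that $A'\cup B'$ admits a bounded solution operator for $\bar\partial$ (this is where the Cartan-pair hypothesis is used). In a tubular neighborhood of the diagonal in $X\times X$ one can write $\gamma=\mathrm{Id}+c$, $\alpha=\mathrm{Id}+a$, $\beta=\mathrm{Id}+b$ in vector-field notation, so that the splitting equation $\beta=\gamma\circ\alpha$ becomes
\[
b - a = c\circ(\mathrm{Id}+a) \qquad \text{on } C'.
\]

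The first main step would be to construct a bounded linear splitting operator $T\colon c\mapsto (a,b)$ producing holomorphic $(a,b)$ on $(A',B')$ with $b-a=c$ on $C'$ and $\|a\|_{A'}+\|b\|_{B'}\leq K\|c\|_{\widetilde{C}}$. Choose a smooth cutoff $\chi$ on $A'\cup B'$ equal to $1$ near $\overline{A'\setminus B'}$ and $0$ near $\overline{B'\setminus A'}$; then $(\chi-1)c$ extends by $0$ to a smooth function $\tilde a$ on $A'$ and $\chi c$ extends by $0$ to $\tilde b$ on $B'$, with $\tilde b - \tilde a = c$ on $C'$. Since $c$ is holomorphic on $\widetilde{C}$, the forms $\bar\partial\tilde a$ and $\bar\partial\tilde b$ agree on $\widetilde{C}$ and glue to a $\bar\partial$-closed $(0,1)$-form $\omega$ on $A'\cup B'$; solving $\bar\partial u=\omega$ with bounds and setting $a=\tilde a-u$, $b=\tilde b-u$ yields the desired holomorphic pieces, the norm bound coming directly from the bound on the $\bar\partial$-solution operator.

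Next I would run a contraction-mapping argument for the nonlinear equation. The map $\Phi\colon a\mapsto T_1\bigl(c\circ(\mathrm{Id}+a)\bigr)$, where $T_1$ denotes the first component of $T$, is a contraction on a small closed ball in a suitable Banach space of holomorphic maps on $A'$ once $\|c\|_{\widetilde{C}}<\epsilon_\eta$, with $\epsilon_\eta$ chosen in terms of $\eta$, $K$, and a Lipschitz constant for composition. The fixed point yields $\alpha=\mathrm{Id}+a$ and then $\beta=\mathrm{Id}+b$ via the other component of $T$; injectivity follows from $C^1$-closeness to the identity, the $\eta$-estimates from the operator bound on $T$, and continuous dependence on $\gamma$ from the standard parameter-continuity of fixed points of contractions. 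The $\mathcal{F}$-map refinement runs the same construction with the sheaf of holomorphic functions replaced by the subsheaf of sections tangent to $\mathcal{F}$, which both the cutoff splitting and the $\bar\partial$-solution respect in flow-box charts; the tangency refinement analogously uses the ideal sheaf $\mathcal{I}^N$ of $N$-th order vanishing along $X_0$, which is compatible with the construction because $X_0\cap C=\emptyset$.

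The main obstacle, in my view, is not the fixed-point iteration (which is mechanical once $T$ is in hand) but the construction of the bounded linear operator $T$ itself. This requires a bounded $\bar\partial$-solution operator on the fixed domain $A'\cup B'$, obtainable via local Henkin integral kernels followed by a Mayer--Vietoris patching, or via H\"ormander's $L^2$ theory combined with interior estimates for sup-norm control. For the parameter version pursued later in this paper, one additionally needs $T$ and the cutoff $\chi$ to vary continuously with the domain, which is a strictly stronger requirement and is, I expect, the real substance of the subsequent work.
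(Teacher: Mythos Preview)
The paper does not itself prove this theorem; it is quoted from Forstneri\v{c}'s book as a known result, and the paper's own contribution is the parameter version (Theorem~\ref{maintheorem}). That said, the proof of Theorem~\ref{maintheorem} in Section~\ref{proofsection} follows Forstneri\v{c}'s original scheme closely, so one can compare your proposal to that.

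Your linear additive splitting via cutoff and a bounded $\bar\partial$-solution operator is exactly what the paper does (Lemma~\ref{lemmaonpagem55andm56}). The genuine divergence is in the nonlinear step. You propose a single contraction-mapping argument on a fixed Banach space, solving $a=T_1\bigl(c\circ(\mathrm{Id}+a)\bigr)$ directly. Forstneri\v{c}, and the paper via Lemmas~\ref{master311} and~\ref{lemmaonpagem82andm83}, instead runs a Newton-type iteration: at each step one applies the linear splitting to produce $\alpha^{(m)},\beta^{(m)}$, replaces $\gamma^{(m)}$ by the residual $\gamma^{(m+1)}=(\beta^{(m)})^{-1}\circ\gamma^{(m)}\circ\alpha^{(m)}$, and observes that the error drops quadratically (Lemma~\ref{master311}) while the domains shrink by a controlled geometric factor (Lemma~\ref{GEHTJ}); the limit of the compositions $\alpha^{(0)}\circ\cdots\circ\alpha^{(m)}$ gives the final $\alpha$.

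Both routes are correct for the non-parameter statement; your fixed-point argument is shorter and yields continuous dependence on $\gamma$ automatically. The iteration, however, is what the paper needs for the parameter version: the $\bar\partial$-operators built in Theorem~\ref{finalstuff} take data on $\Omega(\epsilon)$ and return solutions only on $\overline{\Omega}$, i.e.\ with an unavoidable loss of domain (see Remark~\ref{remarkonfinalstuff}). The Newton iteration absorbs this loss step by step, whereas a single contraction on a fixed Banach space would require a bounded $\bar\partial$-operator with no loss, which is precisely what the paper avoids having to construct with continuous parameter dependence. Your final paragraph correctly anticipates that this is where the real difficulty lies.
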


In \cite{DiFoWo} and, more recently, in \cite{2016arXiv160702755D}, the need for a parameter version of said theorem has become apparent. More precisely, if both the maps and the domains they are defined on vary continuously with a parameter, one wants the maps obtained from Forstneri\v{c}'s splitting to vary continuously with the parameter as well. The purpose of this paper is to give such a parameter version of Forstneri\v{c}'s splitting lemma in the special case of Euclidean space $\mathbb{C}^n$ and compact parameter space; our main result is the following (precise definitions can be found in Section \ref{prelimsection}):

\theoremstyle{plain}
\newtheorem{maintheorem}[propo]{Theorem}
\begin{maintheorem}
\label{maintheorem}
Let $\mathcal{P}$ be a nonempty compact topological space, let ${((A_\zeta{},B_\zeta{}))}_{\zeta\in\mathcal{P}}$ be admissible and let $\mu>0$. Then there exists a $\tau>0$ satisfying the following:

For each $\eta>0$ there exists an $\epsilon_\eta>0$, such that for every family $({\gamma_\zeta})_{\zeta\in\mathcal{P}}$ of injective holomorphic maps $\gamma_\zeta\colon{C_{\zeta}}({\mu})\to\mathbb{C}^n$ satisfying
\begin{itemize}
\item{$\Vert{\gamma_\zeta-\text{\emph{Id}}}\Vert_{{C_\zeta}({\mu})}<\epsilon_\eta$ for all $\zeta\in\mathcal{P}$,}
\item{$({\gamma_\zeta})_{\zeta\in\mathcal{P}}$ depends continuously on $\zeta\in\mathcal{P}$ in the sense of Definition \ref{contvaryingfunctionsmapsforms},}
\end{itemize}
there exist families $({\alpha_\zeta})_{\zeta\in\mathcal{P}}$ and $({\beta_\zeta})_{\zeta\in\mathcal{P}}$ of injective holomorphic maps $\alpha_\zeta\colon{A_{\zeta}}({2\tau})\to\mathbb{C}^n$ and $\beta_\zeta\colon{B_{\zeta}}({2\tau})\to\mathbb{C}^n$ having the following properties:
\begin{itemize}
\item{$\gamma_{\zeta}={\beta_\zeta}\circ{{\alpha_\zeta}^{-1}}$ on $C_{\zeta}({\tau})$ for all $\zeta\in\mathcal{P}$,}
\item{$\Vert{\alpha_\zeta-\text{\emph{Id}}}\Vert_{{A_\zeta}({2\tau})}<\eta$ and $\Vert{\beta_\zeta-\text{\emph{Id}}}\Vert_{{B_\zeta}({2\tau})}<\eta$ for all $\zeta\in\mathcal{P}$,}
\item{$({\alpha_\zeta})_{\zeta\in\mathcal{P}}$ and $({\beta_\zeta})_{\zeta\in\mathcal{P}}$ depend continuously on $\zeta\in\mathcal{P}$ in the sense of Definition \ref{contvaryingfunctionsmapsforms}.}
\end{itemize}
\end{maintheorem}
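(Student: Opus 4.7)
The plan is to adapt the standard Newton-type iteration underlying Forstneri\v{c}'s original proof, but to carry out every step uniformly in the parameter $\zeta$: one uses the parameter-continuous $\bar\partial$-solution operators promised in the abstract to obtain a parameter-continuous additive splitting, and then compounds it into a compositional one by iteration.

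The first and most substantial ingredient is a parameter-continuous linear splitting lemma: there should exist $0 < \mu' < \mu$ and $K > 0$ such that every continuously varying family of holomorphic maps $c_\zeta \colon C_\zeta(\mu) \to \mathbb{C}^n$ decomposes as $c_\zeta = a_\zeta - b_\zeta$ on $C_\zeta(\mu')$, with $a_\zeta$ holomorphic on $A_\zeta(\mu')$, $b_\zeta$ holomorphic on $B_\zeta(\mu')$, uniform norm bounds $\|a_\zeta\|_{A_\zeta(\mu')}, \|b_\zeta\|_{B_\zeta(\mu')} \le K\|c_\zeta\|_{C_\zeta(\mu)}$, and continuous dependence of both families on $\zeta$. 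To produce it I would use admissibility to pick a continuously varying family of smooth cut-off functions $\chi_\zeta$ equal to $1$ in a neighbourhood of $A_\zeta \setminus B_\zeta$ and $0$ in a neighbourhood of $B_\zeta \setminus A_\zeta$; form the $(0,1)$-form $\omega_\zeta := c_\zeta\,\bar\partial\chi_\zeta$, which is $\bar\partial$-closed, smooth, compactly supported in $C_\zeta(\mu')$, and hence extends by zero to a continuously varying family of $(0,1)$-forms on $A_\zeta(\mu') \cup B_\zeta(\mu')$; solve $\bar\partial u_\zeta = \omega_\zeta$ continuously in $\zeta$ via the $\bar\partial$-solution operator constructed earlier in the paper; and set $a_\zeta := (1-\chi_\zeta) c_\zeta - u_\zeta$ on $A_\zeta(\mu')$ (with $(1-\chi_\zeta) c_\zeta$ extended by zero) and $b_\zeta := -\chi_\zeta c_\zeta - u_\zeta$ on $B_\zeta(\mu')$ (with $\chi_\zeta c_\zeta$ extended by zero). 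Both are then holomorphic by construction, $a_\zeta - b_\zeta = c_\zeta$ on $C_\zeta(\mu')$, and the whole construction is linear in $c_\zeta$ and continuous in $\zeta$.

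With the linear splitting in place, I would run Forstneri\v{c}'s iteration uniformly in $\zeta$: starting from $\alpha_\zeta^{(0)} = \beta_\zeta^{(0)} = \mathrm{Id}$ and $\gamma_\zeta^{(0)} = \gamma_\zeta$, at step $k$ I would write $\gamma_\zeta^{(k)} = \mathrm{Id} + c_\zeta^{(k)}$ on the current intersection domain, split $c_\zeta^{(k)}$ additively via the previous step, and update $\alpha_\zeta^{(k+1)}, \beta_\zeta^{(k+1)}, \gamma_\zeta^{(k+1)}$ by composing with the appropriate pieces and their inverses on slightly shrunken domains (signs arranged so that the compositional first-order identity matches the additive splitting). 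A Cauchy estimate combined with the quadratic nature of composition yields $\|c_\zeta^{(k+1)}\| \le M \|c_\zeta^{(k)}\|^2$ uniformly in $\zeta$ — admissibility is essential here to control all constants uniformly — so the iteration converges super-exponentially once $\epsilon_\eta$ is small enough. The value of $\tau$ is dictated by requiring that the geometric series of domain losses leave $A_\zeta(2\tau), B_\zeta(2\tau), C_\zeta(\tau)$ inside the original domains, and the $\eta$-bounds on $\alpha_\zeta - \mathrm{Id}$ and $\beta_\zeta - \mathrm{Id}$ are controlled by shrinking $\epsilon_\eta$ further.

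Continuity of the limits in $\zeta$ follows because each step is continuous in $\zeta$ (the splitting, and composition and inversion near the identity, all preserve the continuity in the sense of Definition \ref{contvaryingfunctionsmapsforms}) and the convergence is uniform in $\zeta$; injectivity of $\alpha_\zeta, \beta_\zeta$ on the stated domains follows from the smallness of the total increment by the standard argument. I expect the principal difficulty to lie entirely in the first ingredient — producing $u_\zeta$ with continuous dependence on $\zeta$ while the domains $A_\zeta, B_\zeta$ themselves move with $\zeta$ — which is precisely what the earlier sections of the paper are engineered to deliver. Everything downstream is conceptually the classical argument, with all estimates made uniform in $\zeta$ by admissibility.
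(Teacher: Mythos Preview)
Your proposal is essentially the paper's own proof: the additive splitting via cut-offs and the parameter-continuous $\bar\partial$-operator is exactly Lemma~\ref{lemmaonpagem55andm56}, and the Newton iteration with quadratic error, geometric domain loss, and uniform convergence is exactly Section~\ref{proofsection}. Two small points worth tightening: first, in your formulas for $a_\zeta,b_\zeta$ the sign in front of $u_\zeta$ should be $+$ rather than $-$ (as written, $\bar\partial a_\zeta=-2\omega_\zeta\neq 0$); second, your linear splitting is stated for a single fixed pair $\mu'<\mu$, but the iteration applies it at every shrinking scale $\tau_1<\tau_2\le\tau_0$, so you need the bound $K$ to be \emph{uniform in the radii} --- this is precisely why the paper insists (Remarks~\ref{remarkonfinalstuff} and~\ref{remarkinproofofadditivesplitting}) that the $\bar\partial$-constant $C(r)$ is independent of $\epsilon$, and it is the one nontrivial point hiding behind your phrase ``all estimates made uniform''.
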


Regarding continuous dependence on a parameter, the main difficulty is the additive splitting in Forstneri\v{c}'s original proof, where he uses the well-known sup-norm bounded solution operators to $\overline{\partial}$ on bounded strongly pseudoconvex domains with boundary of class $\mathcal{C}^2$. While the estimate is known to be stable under $\mathcal{C}^2$-small perturbations of the boundary, it is a priori not clear whether these $\overline{\partial}$ solution operators depend continuously on the domain, even in Euclidean space $\mathbb{C}^n$. However, Forstneri\v{c}'s original proof consists of an iteration, where, in each step, the occurring domains shrink in a controlled way. Hence, by introducing an intermediate step, it is enough to construct solution operators to $\overline{\partial}$ giving solutions on slightly smaller domains, which makes it easier to ensure continuous dependence on a parameter.

This paper is organized as follows. In Section \ref{prelimsection} we introduce some notation as well as precise notions of continuous dependence on a parameter in various settings.

In Section \ref{dbarsection} we construct solution operators to $\overline{\partial}$ that depend continuously on the domain and satisfy sup-norm estimates which depend continuously on the domain as well. They give solutions on the closures of bounded strictly pseudoconvex domains in $\mathbb{C}^n$ with boundary of class $\mathcal{C}^2$ for forms defined on arbitrarily small neighborhoods. The result we end up with is Theorem \ref{finalstuff}, which might be of independent interest.

Section \ref{technicalsection} contains some technical results and the announced additive splitting, which will be used in Section \ref{proofsection} to prove Theorem \ref{maintheorem}.

It should be remarked that our method in Section \ref{dbarsection} makes use of the properties of the $\overline{\partial}$-operator, but is in no way specific to it. If one is willing to accept that the initial data has to be defined on slightly larger domains, the method can easily be generalized to other systems of linear partial differential equations admitting solution operators, or even to certain linear operators on presheaves. The method can also be applied to find a result similar to Theorem \ref{finalstuff} in the setting of pseudoconvex domains varying with a parameter and solving $\overline{\partial}$ with $L^2$-estimates.

\section{Preliminaries}
\label{prelimsection}

In this section we introduce some notation and define the various notions of continuous dependence on a parameter that appear in this paper.

\theoremstyle{definition}
\newtheorem{setofradius}[propo]{Notation}
\begin{setofradius}
\label{setofradius}
Let $M$ be a subset of $\mathbb{C}^n$ and let $r>0$. Then we define:
\begin{align*}
M(r):=\{z\in{\mathbb{C}^n}\colon{}\exists{x\in{M}}\text{ s.t.\ }|x-z|<r\}\text{.}
\end{align*}
$M(r)$ obviously is an open subset of $\mathbb{C}^n$. Concerning order of operations, taking the boundary is given higher precedence, i.e.\ $\text{b}N(s):=(\text{b}N)(s)$, whenever $s>0$ and $N\subseteq\mathbb{C}^n$.
\end{setofradius}

\theoremstyle{definition}
\newtheorem{supremumonset}[propo]{Notation}
\begin{supremumonset}
\label{supremumonset}
Let $M$ be a nonempty set and let $f\colon M\to\mathbb{C}^m$ be a mapping. We set
\begin{align*}
\left\Vert{f}\right\Vert_M:=\sup_{x\in{}M}{\left\Vert{f(x)}\right\Vert}\in\mathbb{R}_{\geq{}0}\cup\{\infty\}\text{,}
\end{align*}
where $\left\Vert{\cdot}\right\Vert$ denotes the Euclidean norm on $\mathbb{C}^m$.
\end{supremumonset}

Next we define what it means for maps and forms defined on varying domains to depend continuously on a parameter.

\theoremstyle{definition}
\newtheorem{contvaryingfunctionsmapsforms}[propo]{Definition}
\begin{contvaryingfunctionsmapsforms}
\label{contvaryingfunctionsmapsforms}
Let $\mathcal{P}$ be a nonempty topological space and, for each $\zeta\in\mathcal{P}$, let $U_\zeta$ be a nonempty subset of $\mathbb{C}^n$ and let $g_\zeta\colon{U_\zeta}\to\mathbb{C}^m$ be continuous. We say that the family $({g_\zeta})_{\zeta\in\mathcal{P}}$ {\emph{depends continuously on}} $\zeta\in\mathcal{P}$, if the following map is continuous:
\begin{align*}
\colon\{({z,\zeta})\in\mathbb{C}^n\times\mathcal{P}\colon{z\in{U_\zeta}}\}\to\mathbb{C}^m,\text{ }(z,{\zeta})\mapsto{g_\zeta}(z)\text{,}
\end{align*}
where $\mathbb{C}^n\times\mathcal{P}$ is equipped with the product topology and $\{({z,\zeta})\in\mathbb{C}^n\times\mathcal{P}\colon{z\in{U_\zeta}}\}\subseteq\mathbb{C}^n\times\mathcal{P}$ is equipped with the subspace topology.\\
If all $U_\zeta$ are additionally assumed to be open, a family $({f_\zeta})_{\zeta\in\mathcal{P}}$ of $(0,1)$-forms $f_{\zeta}=\sum_{j=1}^{n}{f_{\zeta}^{(j)}d\overline{z_j}}$ $\in\mathcal{C}_{0,1}^{0}({U_\zeta})$ is said to {\emph{depend continuously on}} $\zeta\in\mathcal{P}$ if the family $({f_\zeta}^{(j)})_{\zeta\in\mathcal{P}}$ depends continuously on $\zeta\in\mathcal{P}$ in the above sense for all $j=1,{\dotsc},n$.
\end{contvaryingfunctionsmapsforms}

We now define a metric space $\mathcal{Q}$ which, intuitively speaking, describes all bounded strictly pseudoconvex domains with $\mathcal{C}^2$-boundary in $\mathbb{C}^n$. Not surprisingly, the metric $\delta$ on said space is defined in a way that, roughly speaking, two domains are close whenever their defining functions are close in $\mathcal{C}^2$-norm.

\theoremstyle{definition}
\newtheorem{metricspaceQ}[propo]{Definition}
\begin{metricspaceQ}
\label{metricspaceQ}
It is well known that $(\mathcal{C}^2(\mathbb{C}^n;\mathbb{R}),\delta)$ is a complete metric space, where for $r_1,r_2\in{\mathcal{C}^2(\mathbb{C}^n;\mathbb{R})}$:
\begin{align*}
\delta(r_1,r_2)=\sum_{j=1}^{\infty}{2^{-j}\cdot\frac{|r_2-r_1|_{2,B_j}}{|r_2-r_1|_{2,B_j}+1}}
\end{align*}
where $B_j$ is the closed ball of radius $j$ around $0\in\mathbb{C}^n$ and $|{\cdot}|_{2,B_j}$ denotes the $\mathcal{C}^2$-norm. For $r\in{\mathcal{C}^2(\mathbb{C}^n;\mathbb{R})}$, we define $\Omega^{(r)}$ as the set of all points where $r<0$.

Define
\begin{align*}
\mathcal{Q}:=\{r\in{\mathcal{C}^2(\mathbb{C}^n;\mathbb{R})\colon{}}
&dr\neq{0}\text{ at every point where }r\text{ vanishes,}\\
&r\text{ strictly plurisubh. in a nhbd. of b}\Omega^{(r)},\\
&\Omega^{(r)}\text{ is nonempty, bounded and connected}\}\text{,}
\end{align*}
where $\text{b}\Omega^{(r)}$ denotes the boundary of $\Omega^{(r)}$. If $r\in\mathcal{Q}$, then $\Omega^{(r)}$ is a bounded strictly pseudoconvex domain with $\mathcal{C}^2$-boundary in $\mathbb{C}^n$. Conversely, any (nonempty) bounded strictly pseudoconvex domain with $\mathcal{C}^2$-boundary in $\mathbb{C}^n$ is given as $\Omega^{(r)}$ for some $r\in\mathcal{Q}$. We will always assume $\mathcal{Q}$ to be equipped with the topology it gets from the metric $\delta$.
\end{metricspaceQ}

We now define what it means for a family of nonempty bounded strictly pseudoconvex domains in $\mathbb{C}^n$ with boundary of class $\mathcal{C}^2$ to depend continuously on a parameter.

\theoremstyle{definition}
\newtheorem{contvaryingstrpscx}[propo]{Definition}
\begin{contvaryingstrpscx}
\label{contvaryingstrpscx}
Let $\mathcal{P}$ be a nonempty topological space and let $(\Omega_\zeta)_{\zeta\in\mathcal{P}}$ be a family of nonempty bounded strictly pseudoconvex domains in $\mathbb{C}^n$ with boundary of class $\mathcal{C}^2$. We say that $(\Omega_\zeta)_{\zeta\in\mathcal{P}}$ {\emph{depends continuously on}} $\zeta\in\mathcal{P}$, if there exists a continuous map $d\colon\mathcal{P}\to\mathbb{R}_{>0}$ satisfying the following:
\begin{enumerate}
\item{for all $\zeta\in\mathcal{P}$ the signed distance function $\rho_{\Omega_\zeta}$ of $\Omega_\zeta$ is of class $\mathcal{C}^2$ on $\text{b}\Omega_\zeta(d({\zeta}))$ and satisfies $\text{d}\rho_{\Omega_\zeta}\neq{0}$ at every point of $\text{b}\Omega_\zeta(d({\zeta}))$,}
\item{
for all $\zeta_0\in\mathcal{P}$ there exists an open neighborhood $\mathcal{W}_{\zeta_0}$ of $\zeta_0$ in $\mathcal{P}$, such that:
\begin{enumerate}
\item \label{technicalinclusion} {for all $\zeta\in\mathcal{W}_{\zeta_0}$ we have $\text{b}\Omega_\zeta\subseteq\text{b}\Omega_{\zeta_0}(d({\zeta_0})/2)\Subset\text{b}\Omega_{\zeta}(3d({\zeta})/4)$,}
\item \label{variescontwithC2norm} {$\zeta\mapsto\rho_{\Omega_\zeta}$ is continuous as a map from $\mathcal{W}_{\zeta_0}$ to the space of real-valued $\mathcal{C}^2$-functions on $\overline{\text{b}\Omega_{\zeta_0}(d({\zeta_0})/2)}$, equipped with the $\mathcal{C}^2$-norm.}
\end{enumerate}
}
\end{enumerate}
\end{contvaryingstrpscx}

\theoremstyle{remark}
\newtheorem{explanationforcontdepdefinition}[propo]{Remark}
\begin{explanationforcontdepdefinition}
\label{explanationforcontdepdefinition}
The intuition behind Definition \ref{contvaryingstrpscx} is the following:\\
The well known sup-norm estimates for $\overline{\partial}$ on bounded strictly pseudoconvex domains with boundary of class $\mathcal{C}^2$ depend on the $\mathcal{C}^2$-data of a defining function. Furthermore, the signed distance function of a bounded $\mathcal{C}^2$-smooth domain is of class $\mathcal{C}^2$ in a neighborhood of the boundary. Hence it is natural to require continuous dependence of the signed distance function with respect to the $\mathcal{C}^2$-norm. The technical assumption \ref{technicalinclusion} is necessary to ensure that the map in \ref{variescontwithC2norm} is welldefined: each $\rho_{\Omega_\zeta}$ is $\mathcal{C}^2$ on an {\emph{individual}} set, so one needs to find a {\emph{common}} set (for $\zeta$ close to $\zeta_0$), on which all $\rho_{\Omega_\zeta}$ are $\mathcal{C}^2$.
\end{explanationforcontdepdefinition}

\theoremstyle{remark}
\newtheorem{differentdefvarycont}[propo]{Remark}
\begin{differentdefvarycont}
\label{differentdefvarycont}
An alternative way to Definition \ref{contvaryingstrpscx} of defining what it means for a family $(\Omega_\zeta)_{\zeta\in\mathcal{P}}$ of nonempty bounded strictly pseudoconvex domains in $\mathbb{C}^n$ with boundary of class $\mathcal{C}^2$ to depend continuously on $\zeta\in\mathcal{P}$ would be to require that there exists a continuous map $\colon\mathcal{P}\to\mathcal{Q}$, $\zeta\mapsto{}r_\zeta$, such that $\Omega_\zeta{}=\Omega^{(r_\zeta)}$ for all $\zeta$ (compare Thm.\ \ref{finalstuff}).\\
Definition \ref{contvaryingstrpscx} is easier to verify, whereas the alternative definition is easier to handle from a technical point of view. How the two definitions relate is the content of Lemma \ref{passingtoalternativedefinition}.
\end{differentdefvarycont}

We now define what it means for a family of pairs ${((A_\zeta{},B_\zeta{}))}_{\zeta\in\mathcal{P}}$ to be ``admissible'' for Theorem \ref{maintheorem}. The pairs $(A_\zeta{},B_\zeta{})$ play the same role in Theorem \ref{maintheorem} as the Cartan pair $(A,B)$ plays in Forstneri\v{c}'s original result.

\theoremstyle{definition}
\newtheorem{defadmissible}[propo]{Definition}
\begin{defadmissible}
\label{defadmissible}
Let $\mathcal{P}$ be a nonempty compact topological space and, for all $\zeta\in\mathcal{P}$, let $A_\zeta$, $B_\zeta$ be compact subsets of $\mathbb{C}^n$. The family of pairs ${((A_\zeta{},B_\zeta{}))}_{\zeta\in\mathcal{P}}$ is called {\emph{admissible}} if the following is satisfied:
\begin{enumerate}
\item{$A_\zeta\cap{}B_\zeta$ is nonempty and $A_\zeta\cup{}B_\zeta$ is the closure of its interior $\text{Int}({A_\zeta\cup{}B_\zeta})$ for all $\zeta$,}
\item{$(\text{Int}({A_\zeta\cup{}B_\zeta}))_{\zeta\in\mathcal{P}}$ is a family of nonempty bounded strictly pseudoconvex domains in $\mathbb{C}^n$ with boundary of class $\mathcal{C}^2$ depending continuously on $\zeta\in\mathcal{P}$ in the sense of Definition \ref{contvaryingstrpscx},}
\item{for all $\zeta$, the sets ${A_\zeta}\setminus{B_\zeta}$ and ${B_\zeta}\setminus{A_\zeta}$ are nonempty, but $\overline{{A_\zeta}\setminus{B_\zeta}}\cap\overline{{B_\zeta}\setminus{A_\zeta}}=\emptyset$,}
\item{both $\zeta\mapsto\overline{{A_\zeta}\setminus{B_\zeta}}$ and $\zeta\mapsto\overline{{B_\zeta}\setminus{A_\zeta}}$ are continuous as maps from $\mathcal{P}$ to the set of nonempty compact subsets of $\mathbb{C}^n$, equipped with the topology induced by the Hausdorff distance.}
\end{enumerate}
\end{defadmissible}

\theoremstyle{definition}
\newtheorem{notationwhenadmissible}[propo]{Notation}
\begin{notationwhenadmissible}
\label{notationwhenadmissible}
If $\mathcal{P}$ is a nonempty compact topological space and ${((A_\zeta{},B_\zeta{}))}_{\zeta\in\mathcal{P}}$ is admissible, we adapt the following notation:
\begin{itemize}
\item{$C_\zeta{}:={A_\zeta\cap{}B_\zeta}$,}
\item{$\Omega_\zeta{}:=\text{Int}({A_\zeta\cup{}B_\zeta})$.}
\end{itemize}
\end{notationwhenadmissible}

\section{Continuously Varying $\overline{\partial}$ Solution Operators}
\label{dbarsection}

In this section we will construct solution operators to $\overline{\partial}$ that depend continuously on the domain and satisfy sup-norm estimates which depend continuously on the domain as well. They will give solutions on the closures of bounded strictly pseudoconvex domains in $\mathbb{C}^n$ with boundary of class $\mathcal{C}^2$ for forms defined on arbitrarily small neighborhoods.

As mentioned in Definition \ref{metricspaceQ}, if $r\in\mathcal{Q}$, then $\Omega^{(r)}$ is a bounded strictly pseudoconvex domain with $\mathcal{C}^2$-boundary in $\mathbb{C}^n$. Conversely, any (nonempty) bounded strictly pseudoconvex domain with $\mathcal{C}^2$-boundary in $\mathbb{C}^n$ is given as $\Omega^{(r)}$ for some $r\in\mathcal{Q}$. Hence it suffices to define solution operators for the domains $\Omega^{(r)}$, $r\in\mathcal{Q}$. The result we will prove in this section is the following:

\theoremstyle{plain}
\newtheorem{finalstuff}[propo]{Theorem}
\begin{finalstuff}
\label{finalstuff}
There exist a continuous map $C\colon{\mathcal{Q}}\to{}\mathbb{{R}}_{{>0}}$ and a collection of linear operators
\begin{align*}
\mathcal{S}^{r,{\epsilon}}\colon{}\mathcal{C}_{0,1}^{0}(\Omega^{(r)}({\epsilon}))\to{}\mathcal{C}^0\left(\overline{\Omega^{(r)}}\right)\text{{,}}
\end{align*}
for $\epsilon{}>0$ and $r\in\mathcal{Q}$, such that:\\
\begin{enumerate}
\item\label{linearinfinalstuff}{$\mathcal{S}^{r,{\epsilon}}$ is linear,}
\\
\item\label{regularinfinalstuff}{for all positive intergers $k$: if $f\in\mathcal{C}_{0,1}^{k}(\Omega^{(r)}({\epsilon}))$ then $\mathcal{S}^{r,{\epsilon}}(f)\in{\mathcal{C}^k\left(\overline{\Omega^{(r)}}\right)}$,}
\\
\item\label{solutioninfinalstuff}{if $f\in\mathcal{C}_{0,1}^{1}(\Omega^{(r)}({\epsilon}))$ and $\overline{\partial}f=0$ on $\Omega^{(r)}(\epsilon)$ then $\overline{\partial}(\mathcal{S}^{r,{\epsilon}}(f))=f$ on $\overline{\Omega^{(r)}}$,}
\\
\item\label{estimateinfinalstuff}{if $f\in\mathcal{C}_{0,1}^{1}(\Omega^{(r)}({\epsilon}))$ then $\Vert\mathcal{S}^{r,{\epsilon}}(f)\Vert_{\overline{\Omega^{(r)}}}\leq{}C(r)\Vert{}f\Vert_{\Omega^{(r)}({\epsilon})}$ in $\mathbb{R}_{\geq{0}}\cup{\{\infty\}}$,}
\\
\item\label{contdependenceindbarresult}{if $\epsilon{>0}$ is fixed, $T$ is a nonempty topological space and if
\begin{itemize}
\item{$(\Omega_t)_{t\in{T}}$ is a family of nonempty bounded strictly pseudoconvex domains in $\mathbb{C}^n$ with boundary of class $\mathcal{C}^2$ depending continuously on $t\in{T}$ in the sense that there exists a continuous map $\colon{}T\to\mathcal{Q}$, $t\mapsto r_t$, such that $\Omega_t=\Omega^{(r_t)}$ for all $t\in{T}$,}
\item{$(f_t)_{t\in{T}}$ is a family of $(0,1)$-forms $f_t\in\mathcal{C}_{0,1}^{1}(\Omega_{t}({\epsilon}))$ depending continuously on $t\in{T}$ in the sense of Definition \ref{contvaryingfunctionsmapsforms},}
\end{itemize}
then the family $\left(\mathcal{S}^{r_t,{\epsilon}}\left(f_t\right)\right)_{t\in{T}}$ of functions $\mathcal{S}^{r_t,{\epsilon}}\left(f_t\right)\colon\overline{\Omega_t}\to\mathbb{C}$ depends continuously on $t\in{T}$ in the sense of Definition \ref{contvaryingfunctionsmapsforms}.}
\end{enumerate}
\end{finalstuff}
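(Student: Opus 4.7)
The strategy is to exploit the ``extra room'' between $\overline{\Omega^{(r)}}$ and $\Omega^{(r)}(\epsilon)$: for each $r\in\mathcal{Q}$, I plan to construct an intermediate defining function $\widehat{r}\in\mathcal{Q}$ with $\overline{\Omega^{(r)}}\Subset\Omega^{(\widehat{r})}\Subset\Omega^{(r)}(\epsilon)$, depending continuously on $r$, and apply a sup-norm bounded Henkin--Ramirez-type solution operator on $\Omega^{(\widehat{r})}$. Setting $\mathcal{S}^{r,\epsilon}(f):=T_{\widehat{r}}\bigl(f|_{\Omega^{(\widehat{r})}}\bigr)\big|_{\overline{\Omega^{(r)}}}$, properties \ref{linearinfinalstuff}--\ref{estimateinfinalstuff} reduce immediately to the corresponding properties of the Henkin--Ramirez operator (noting that $\overline{\Omega^{(r)}}\Subset\Omega^{(\widehat r)}$ makes the restriction automatic), while property \ref{contdependenceindbarresult} requires continuous dependence of both $\widehat{r}$ and the integral kernel on $r$.

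For the construction of $\widehat{r}$, I would use the signed distance function $\rho_{\Omega^{(r)}}$, which is $\mathcal{C}^2$ in a tubular neighborhood of $\text{b}\Omega^{(r)}$ with $d\rho_{\Omega^{(r)}}\neq 0$ there, and which inherits strict plurisubharmonicity from $r$ in that neighborhood. For a suitable $\delta=\delta(r,\epsilon)>0$, the set $\{\rho_{\Omega^{(r)}}<\delta\}$ is a strictly pseudoconvex domain contained in $\Omega^{(r)}(\epsilon)$. To obtain a globally defined $\widehat{r}\in\mathcal{Q}$, I would smoothly interpolate between $\rho_{\Omega^{(r)}}-\delta$ in a one-sided neighborhood of $\text{b}\{\rho_{\Omega^{(r)}}<\delta\}$ and a fixed negative constant deep inside $\Omega^{(r)}$, using a cutoff whose support and values are chosen continuously in $r$. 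Continuous dependence of $r\mapsto\widehat{r}$ in the metric $\delta$ on $\mathcal{Q}$ then follows from the $\mathcal{C}^2$-continuity of $r\mapsto\rho_{\Omega^{(r)}}$ encoded in Definition \ref{contvaryingstrpscx} (combined with the conversion between the two notions of continuous dependence alluded to in Remark \ref{differentdefvarycont}).

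On each $\Omega^{(\widehat r)}$, I would invoke a Henkin-type explicit integral solution operator $T_{\widehat r}(f)(z)=\int_{\Omega^{(\widehat r)}}f(\zeta)\wedge K_{\widehat r}(\zeta,z)$, whose kernel is built from $\widehat{r}$, its first derivatives, and a Levi-polynomial-type generating function, and which satisfies $\bigl\|T_{\widehat r}(f)\bigr\|_{\overline{\Omega^{(\widehat r)}}}\leq C(\widehat r)\|f\|_{\Omega^{(\widehat r)}}$ with $C(\widehat r)$ depending only on the $\mathcal{C}^2$-data of $\widehat r$ and the curvature estimates of $\text{b}\Omega^{(\widehat r)}$. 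Setting $C(r):=C(\widehat{r})$ yields a continuous map $\mathcal{Q}\to\mathbb{R}_{>0}$. Linearity, preservation of higher regularity, the $\overline{\partial}$-solution identity for $\overline{\partial}$-closed inputs, and the sup-norm estimate then transfer verbatim to $\mathcal{S}^{r,\epsilon}$.

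The main obstacle is verifying \ref{contdependenceindbarresult}: given a continuous family $t\mapsto r_t$ in $\mathcal{Q}$ and a continuously varying $(f_t)_{t\in T}$, I need $(t,z)\mapsto\mathcal{S}^{r_t,\epsilon}(f_t)(z)$ to be continuous on the total space $\{(z,t):z\in\overline{\Omega^{(r_t)}}\}$. The difficulty is twofold: the kernel $K_{\widehat{r_t}}$ varies with $t$ through $\mathcal{C}^2$-data and is singular on the diagonal, and the domain of integration itself moves. I would address this by rewriting $T_{\widehat{r_t}}(f_t)(z)$ as an integral over a fixed compact set via a cutoff plus extension of $f_t$ by zero (justified for $z\in\overline{\Omega^{(r)}}$, which stays away from $\text{b}\Omega^{(\widehat{r_t})}$ uniformly for $t$ in a small neighborhood of $t_0$), and then invoking a dominated-convergence argument. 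The uniform kernel bounds underlying Henkin's $L^\infty$-estimate, which are stable under $\mathcal{C}^2$-small perturbations of the defining function, provide the required integrable majorant. The $\mathcal{C}^2$-topology on $\mathcal{Q}$ is tailored precisely so that all pointwise data entering $K_{\widehat{r_t}}$ converge locally uniformly, which is the key point that closes the continuity argument.
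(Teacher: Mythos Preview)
Your proposal has a genuine gap at the most critical point of the theorem. Your intermediate defining function $\widehat{r}$ is constructed so that $\Omega^{(\widehat{r})}\Subset\Omega^{(r)}(\epsilon)$, and you acknowledge this by writing $\delta=\delta(r,\epsilon)$; hence $\widehat{r}=\widehat{r}(r,\epsilon)$ depends on $\epsilon$. When you then set $C(r):=C(\widehat{r})$, you have \emph{not} defined a map $\mathcal{Q}\to\mathbb{R}_{>0}$, but rather a map $\mathcal{Q}\times\mathbb{R}_{>0}\to\mathbb{R}_{>0}$. The theorem, however, explicitly requires $C$ to be independent of $\epsilon$ (see Remark~\ref{remarkonfinalstuff}(3) and Remark~\ref{remarkinproofofadditivesplitting}), and this is precisely what makes it useful in the iteration in Section~\ref{proofsection}: as the domains shrink, $\epsilon\to 0$, and one needs the estimate to survive. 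To repair this you would need either that the Henkin constant $s\mapsto C(s)$ is continuous on $\mathcal{Q}$ and that $\widehat{r}(r,\epsilon)\to r$ as $\epsilon\to 0$ (so that $\sup_\epsilon C(\widehat{r}(r,\epsilon))$ is locally bounded and can be dominated by a continuous function), or some other uniform control. You assert the former (``$C(\widehat r)$ depending only on the $\mathcal{C}^2$-data''), but the standard references (e.g.\ Range's Theorem~3.6, which the paper cites) give only local \emph{boundedness} of the constant under $\mathcal{C}^2$-perturbations, not continuity; and the constant is not even canonically defined, since the Henkin--Ramirez construction involves choices.

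More broadly, your proposal assumes precisely the thing the paper identifies as the core difficulty in the introduction: that a single family of Henkin-type operators $s\mapsto T_s$ varies continuously in $s\in\mathcal{Q}$. The paper's proof avoids this altogether by a partition-of-unity argument \emph{on the parameter space} $\mathcal{Q}$: for each $r$ and $\epsilon$ it writes $\mathcal{S}^{r,\epsilon}(f)=\sum_\alpha \phi_\alpha(r)\,S_{(\alpha,\epsilon)}(f)$, where each $S_{(\alpha,\epsilon)}$ is a \emph{fixed} classical solution operator (on a fixed intermediate domain independent of $r$) and only the scalar coefficients $\phi_\alpha(r)$ vary with $r$. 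Continuity in $r$ then comes for free from continuity of the $\phi_\alpha$. The $\epsilon$-independence of $C$ is obtained by a two-layer cover: a locally finite refinement $(U_\beta)$ that does \emph{not} depend on $\epsilon$ (built from the locally bounded Henkin constants via the sets $\mathcal{O}_k$), and an $\epsilon$-dependent partition of unity subordinate to a finer cover. The bound $M_r$ is then read off from the $\epsilon$-independent layer, shown to be upper semicontinuous, and majorized by a continuous $C$. This is the missing mechanism in your approach.
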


\theoremstyle{remark}
\newtheorem{remarkonfinalstuff}[propo]{Remark}
\begin{remarkonfinalstuff}
\label{remarkonfinalstuff}
We make some remarks about Theorem \ref{finalstuff}:
\begin{enumerate}
\item{One gets solutions on $\overline{\Omega^{(r)}}$, but the initial data has to be defined on the $\epsilon$-neighborhood ${\Omega^{(r)}}(\epsilon)$.}
\item{Property \ref{contdependenceindbarresult} gives the desired continuous dependence on a parameter.}
\item{Property \ref{estimateinfinalstuff} is the sup-norm estimate. Since $C$ is continuous, the estimate depends continuously on the domain. It is important to note that $C$ only depends on $r$ and {\emph{not}} on $\epsilon$. This is crucial for the proof of the estimate in Lemma \ref{lemmaonpagem55andm56} and, by extension, for the proof of Theorem \ref{maintheorem}. If $C$ was to explode as $\epsilon$ approaches $0$, then the controlled shrinking of the occurring domains in the proof of Theorem \ref{maintheorem} would not be possible and the iteration would break down.}
\end{enumerate}
\end{remarkonfinalstuff}

The remainder of this section is devoted to proving Theorem \ref{finalstuff}. We start with the following technical lemma: 

\theoremstyle{plain}
\newtheorem{einfuehrungws}[propo]{Lemma}
\begin{einfuehrungws}
\label{einfuehrungws}
If $s\in\mathcal{Q}$ then there exist constants $d_s>0$, $L_s>0$ and a bounded open neighborhood $W_s$ of $\mathrm{b}\Omega^{(s)}$ in $\mathbb{C}^n$ with the following properties:
\begin{enumerate}
\item\label{weesnotcontain}{$W_s$ does not contain $\Omega^{(s)}$,}
\item\label{weesgooddifferential}{$ds\neq{0}$ at every point in $W_s$,}
\item\label{weesstrpsh}{$s$ is strictly plurisubharmonic on $W_s$,}
\item\label{weeslang}{for each $\widetilde{s}\in\mathcal{Q}$ with $\delta{(s,\widetilde{s})}<d_s$ there exists a constant ${\eta}_{\widetilde{s}}^{(s)}>0$ with the following properties:
\begin{enumerate}
\item\label{weesstableconstant}{for all $\eta\in{\left[{0,{\eta}_{\widetilde{s}}^{(s)}}\right]}$ the set
\begin{align*}
\Omega_{\widetilde{s},{s},{\eta}}:=\left({\Omega^{(s)}}\setminus{W_s}\right)\cup\{x\in{W_s}\colon\widetilde{s}(x)<\eta\}
\end{align*}
is a bounded strictly pseudoconvex open set in $\mathbb{C}^n$ with boundary of class $\mathcal{C}^2$. Furthermore there is a linear operator
\begin{align*}
S_{\widetilde{s},{s},{\eta}}\colon\mathcal{C}_{0,1}^{0}\left(\overline{\Omega_{\widetilde{s},{s},{\eta}}}\right)\to{}\mathcal{C}^0\left({\Omega_{\widetilde{s},{s},{\eta}}}\right)
\end{align*}
with the following properties:
\begin{enumerate}
\item{for all positive integers $k$: if $f\in\mathcal{C}_{0,1}^{0}\left(\overline{\Omega_{\widetilde{s},{s},{\eta}}}\right)\cap\mathcal{C}_{0,1}^{k}\left({\Omega_{\widetilde{s},{s},{\eta}}}\right)$ then $S_{\widetilde{s},{s},{\eta}}(f)\in\mathcal{C}^k\left({\Omega_{\widetilde{s},{s},{\eta}}}\right)$,}
\item{if $f\in\mathcal{C}_{0,1}^{1}\left(\overline{\Omega_{\widetilde{s},{s},{\eta}}}\right)$ and $\overline{\partial}f=0$ then $\overline{\partial}({S_{\widetilde{s},{s},{\eta}}}(f))=f$,}
\item{$\left\Vert{S_{\widetilde{s},{s},{\eta}}(f)}\right\Vert_{\Omega_{\widetilde{s},{s},{\eta}}}\leq{L_s}\cdot\Vert{f}\Vert_{\overline{\Omega_{\widetilde{s},{s},{\eta}}}}$ for all $f\in\mathcal{C}_{0,1}^{0}\left(\overline{\Omega_{\widetilde{s},{s},{\eta}}}\right)\cap\mathcal{C}_{0,1}^{1}\left({\Omega_{\widetilde{s},{s},{\eta}}}\right)$,}
\end{enumerate}
}
\item\label{weessqueezein}{for all $\epsilon{}>0$ there exist $\widetilde{\delta}_{{\epsilon},\widetilde{s},{s}}>0$ and $\eta_{{\epsilon},\widetilde{s},{s}}\in{}\Big({0,{\eta}_{\widetilde{s}}^{(s)}}\Big]$, such that all $r\in\mathcal{Q}$ with ${\delta}(r,\widetilde{s})<\widetilde{\delta}_{{\epsilon},\widetilde{s},{s}}$ satisfy the following:
\begin{align*}
\overline{\Omega^{(r)}} & \subseteq{}\Omega_{\widetilde{s},{s},{\eta}_{{\epsilon},\widetilde{s},{s}}}\text{,}\\
{\Omega^{(r)}}({\epsilon}) & \supseteq\overline{\Omega_{\widetilde{s},{s},{\eta}_{{\epsilon},\widetilde{s},{s}}}}\text{.}
\end{align*}
}
\end{enumerate}
}
\end{enumerate}
\end{einfuehrungws}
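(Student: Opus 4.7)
My strategy is to first fix a single open neighborhood $W_s$ of $\text{b}\Omega^{(s)}$ and a $\mathcal{C}^2$-control radius $d_s$ around $s$, then construct the modified domains $\Omega_{\widetilde{s},s,\eta}$ using $\widetilde{s}-\eta$ as a defining function near the boundary, and finally invoke a classical integral solution operator for $\overline{\partial}$ on strictly pseudoconvex $\mathcal{C}^2$-domains which is known to be stable under $\mathcal{C}^2$-perturbations of the defining function. Since $s\in\mathcal{Q}$ forces $\{s=0\}=\text{b}\Omega^{(s)}$ (as $ds\neq 0$ on the zero set means every such point is a genuine boundary point of $\{s<0\}$), the conditions $ds\neq 0$ and strict plurisubharmonicity hold on some open neighborhood $U$ of the compact set $\text{b}\Omega^{(s)}$; I would choose a bounded open $W_s$ with $\text{b}\Omega^{(s)}\subset W_s$ and $\overline{W_s}\subset U$, thin enough not to contain $\Omega^{(s)}$. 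Properties \ref{weesnotcontain}, \ref{weesgooddifferential}, \ref{weesstrpsh} then hold by construction.

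Next I pick an integer $J$ with $\overline{W_s}\cup\overline{\Omega^{(s)}}\subset\text{Int}(B_J)$, noting that by the definition of $\delta$, making $\delta(s,\widetilde{s})$ small forces $|s-\widetilde{s}|_{2,B_J}$ small. Since the compact set $\overline{B_J}\setminus W_s$ is disjoint from $\{s=0\}\subset W_s$, there is a $c_s>0$ with $|s|\geq c_s$ on it. I choose $d_s>0$ so small that $\delta(s,\widetilde{s})<d_s$ forces $\widetilde{s}$ strictly plurisubharmonic on $\overline{W_s}$ with $d\widetilde{s}\neq 0$ there and $|\widetilde{s}-s|<c_s/4$ on $\overline{B_J}$; all three conditions are open in the $\mathcal{C}^2$-topology. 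I then set $\eta_{\widetilde{s}}^{(s)}:=c_s/2$, which depends only on $s$. A case analysis on points interior to $W_s$, points interior to $\mathbb{C}^n\setminus\overline{W_s}$ lying in $\Omega^{(s)}$, and points of $\text{b}W_s\cap\Omega^{(s)}$ (the last handled by the $c_s$-bound which forces $\widetilde{s}<\eta$ throughout a $W_s$-neighborhood) shows that $\Omega_{\widetilde{s},s,\eta}$ is open and bounded for $\eta\in[0,\eta_{\widetilde{s}}^{(s)}]$, and that its topological boundary equals $\{x\in W_s:\widetilde{s}(x)=\eta\}$, which is $\mathcal{C}^2$-smooth and strictly pseudoconvex with defining function $\widetilde{s}-\eta$.

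For the existence of a linear $\overline{\partial}$-solution operator $S_{\widetilde{s},s,\eta}$ with a sup-norm bound $L_s$ uniform in $\widetilde{s}$ and $\eta$, I would invoke the classical integral solution operators for $\overline{\partial}$ on bounded strictly pseudoconvex domains with $\mathcal{C}^2$-boundary (Henkin--Ram\'{\i}rez, Grauert--Lieb, Lieb--Range). Their kernels are explicit expressions in the defining function and its derivatives, so the resulting sup-norm estimate depends only on the $\mathcal{C}^2$-size of the defining function and on uniform lower bounds for $|d\widetilde{s}|$ and the Levi form. By construction these quantities are controlled uniformly over the family $\{\widetilde{s}-\eta:\delta(s,\widetilde{s})<d_s,\ \eta\in[0,c_s/2]\}$, yielding a single constant $L_s$.

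Finally, for property \ref{weessqueezein}, fix $\widetilde{s}$ and $\epsilon>0$. For the inclusion $\overline{\Omega^{(r)}}\subseteq\Omega_{\widetilde{s},s,\eta_{\epsilon,\widetilde{s},s}}$, decompose $\overline{\Omega^{(r)}}$ into its piece outside $W_s$, which by the $c_s$-separation and $\mathcal{C}^0$-closeness of $r$ to $s$ lies in $\Omega^{(s)}\setminus W_s$, and its piece inside $W_s$, where $r\leq 0$ forces $\widetilde{s}<\eta_{\epsilon,\widetilde{s},s}$ provided $\widetilde{\delta}_{\epsilon,\widetilde{s},s}$ is small enough. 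For the reverse inclusion $\Omega^{(r)}(\epsilon)\supseteq\overline{\Omega_{\widetilde{s},s,\eta_{\epsilon,\widetilde{s},s}}}$, points in $\overline{\Omega^{(s)}}\setminus W_s$ are already in $\Omega^{(r)}$ by the $c_s$-separation, while a point $y\in\overline{W_s}$ with $\widetilde{s}(y)\leq\eta_{\epsilon,\widetilde{s},s}$ satisfies $r(y)\leq\eta_{\epsilon,\widetilde{s},s}+O(\widetilde{\delta}_{\epsilon,\widetilde{s},s})$, and since $|dr|$ is bounded below on $\overline{W_s}$ by a constant depending only on $s$, moving $y$ in the $-\nabla r$ direction by a distance of order $(\eta_{\epsilon,\widetilde{s},s}+\widetilde{\delta}_{\epsilon,\widetilde{s},s})/\inf_{\overline{W_s}}|dr|$ lands in $\Omega^{(r)}$; choosing both parameters small enough in terms of $\epsilon$ makes this displacement at most $\epsilon$. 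The main obstacle will be the uniform sup-norm bound $L_s$ in the third step: this requires the classical but technical stability of integral $\overline{\partial}$-solution operators under $\mathcal{C}^2$-perturbations of the defining function, combined with careful bookkeeping to verify that the one-parameter family $\widetilde{s}-\eta$ really lies in a $\mathcal{C}^2$-bounded set with uniform strict pseudoconvexity; the remaining steps are essentially elementary separation and continuity arguments.
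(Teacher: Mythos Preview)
Your approach matches the paper's: construct $W_s$ from the definition of $\mathcal{Q}$, invoke the $\mathcal{C}^2$-stability of the classical sup-norm $\overline{\partial}$-estimates for the uniform constant $L_s$ (the paper cites Range, Theorem~3.6), and handle Property~\ref{weessqueezein} by elementary separation and continuity arguments. There is one genuine gap, however.

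In your argument for the inclusion $\overline{\Omega^{(r)}}\subseteq\Omega_{\widetilde{s},s,\eta_{\epsilon,\widetilde{s},s}}$ you say the piece of $\overline{\Omega^{(r)}}$ outside $W_s$ lies in $\Omega^{(s)}\setminus W_s$ ``by the $c_s$-separation and $\mathcal{C}^0$-closeness of $r$ to $s$.'' But the metric $\delta$ only controls $|r-s|$ on the fixed ball $B_J$; it says nothing about $r$ outside $B_J$. Nothing you have written rules out $\Omega^{(r)}=\{r<0\}$ having points far outside $B_J$, and then the inclusion fails outright since $\Omega_{\widetilde{s},s,\eta}\subseteq\mathrm{Int}(B_J)$. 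The missing ingredient is the \emph{connectedness} of $\Omega^{(r)}$ built into the definition of $\mathcal{Q}$: for $\widetilde{\delta}_{\epsilon,\widetilde{s},s}$ small enough one has $r>0$ on $\mathrm{b}B_J$ (because $s$ has a positive minimum there and $r$ is $\mathcal{C}^0$-close to $s$ on $B_J$), so $\Omega^{(r)}\cap\mathrm{b}B_J=\emptyset$; since $\Omega^{(r)}$ is connected and contains $\Omega^{(s)}\setminus W_s\subseteq\mathrm{Int}(B_J)$, it must lie entirely in $\mathrm{Int}(B_J)$, and now your $c_s$-argument goes through. The paper singles out exactly this point: ``This is the only point where we use that the sets defined by the elements of $\mathcal{Q}$ are connected.'' With this localization in hand, your gradient-flow sketch for the reverse inclusion is also workable, though it still needs a line ensuring the short flow stays in the region where $|dr|$ is bounded below.
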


\begin{proof}
By definition of $\mathcal{Q}$ it is obvious how to achieve Properties \ref{weesnotcontain}, \ref{weesgooddifferential} and \ref{weesstrpsh}. Property \ref{weesstableconstant} follows from the definition of the metric $\delta$ on $\mathcal{Q}$ and the fact that the well-known estimates for $\overline{\partial}$ on bounded strictly pseudoconvex domains in $\mathbb{C}^n$ with boundary of class $\mathcal{C}^2$ are stable under small $\mathcal{C}^2$ perturbations of the boundary. A formal statement of that fact can be found in Range's book \cite[Theorem 3.6 on p.\ 210]{RangeBook}.\\
Finally, Property \ref{weessqueezein} follows from a straight forward calculation. This is the only point where we use that the sets defined by the elements of $\mathcal{Q}$ are connected.
\end{proof}

For the remainder of this section we fix a set $W_s$ for each $s\in\mathcal{Q}$, such that the conclusion of Lemma \ref{einfuehrungws} holds true for {\emph{this}} choice of $W_s$ and {\emph{some}} choice of $d_s$, $L_s$.

Even with this fixed $W_s$, the constants $d_s$ and $L_s$ in Lemma \ref{einfuehrungws} are obviously not uniquely determined: one could, for example, replace $d_s$ by $d_s/2$ and $L_s$ by $L_s+1$. With this in mind we define
\begin{align*}
I_s:=\inf\{v\in\mathbb{R}_{>0}\colon & \text{there exists }u\in\mathbb{R}_{>0}\text{, s.t.\ the conclusion of Lemma \ref{einfuehrungws} holds}\\
& \text{true with our fixed choice of }W_s\text{ and with }d_s=u\text{ and }L_s=v\}\text{,}
\end{align*}
and
\begin{align*}
\widehat{M}_s:=\min\{m\in\mathbb{Z}_{>0}\colon{m}>I_s\}\text{.}
\end{align*}
So $\widehat{M}_s$ is the smallest positive integer strictly larger than $I_s$ and we have $I_s\geq\widehat{M}_s-1$. Furthermore, if $d_s$ is chosen appropriately, the conclusion of Lemma \ref{einfuehrungws} holds true with $\widehat{M}_s$ in the role of $L_s$. The remaining objects which exist by Lemma \ref{einfuehrungws} are obviously not uniquely determined either.  From now on, we fix choices of $d_s$, ${\eta}_{\widetilde{s}}^{(s)}$, $S_{\widetilde{s},{s},{\eta}}$, $\widetilde{\delta}_{{\epsilon},\widetilde{s},{s}}$ and $\eta_{{\epsilon},\widetilde{s},{s}}$ such that the conclusion of Lemma \ref{einfuehrungws} holds true with $\widehat{M}_s$ in the role of $L_s$.

Armed with this notation, we define a cover of $\mathcal{Q}$ as follows: for every positive integer $k$ we define
\begin{align*}
\mathcal{O}_k:=\bigcup_{s\in\mathcal{Q}\colon\widehat{M}_s\leq{k}}{B(s,{d_s})}\text{,}
\end{align*}
where $B(s,{d_s})$ denotes the set of all $\widetilde{s}\in\mathcal{Q}$ with ${\delta}(\widetilde{s},s)<d_s$. It is obvious that $\mathcal{O}_1\subseteq\mathcal{O}_2\subseteq\dots$ and that $(\mathcal{O}_k)_{k\in\mathbb{Z}_{>0}}$ is an open cover of $\mathcal{Q}$. Since $\mathcal{Q}$ is a metric space and hence paracompact, said cover admits a locally finite open refinement $(U_{\beta})_{\beta\in\mathcal{B}}$. It is important to note that neither of these two covers depends on the ${\epsilon}>0$ in the statement of Theorem \ref{finalstuff}.

Now, if $r_0\in\mathcal{Q}$, let $k_{r_0}:=\min\{k\in\mathbb{Z}_{>0}\colon{r_0}\in\mathcal{O}_{k}\}$.  By definition of the cover $(\mathcal{O}_k)_{k\in\mathbb{Z}_{>0}}$ there exists an $s({r_0})\in\mathcal{Q}$, such that $\widehat{M}_{s({r_0})}=k_{r_0}$ and $r_0\in{B(s({r_0}),{d_{s({r_0})}})}$. Since $\delta({r_0},{s({r_0})})<d_{s({r_0})}$, we have a welldefined $\widetilde{\delta}_{{\epsilon},{r_0},{s({r_0})}}$ from Lemma \ref{einfuehrungws} for any given ${\epsilon}>0$. For every ${\epsilon}>0$, the following is an open cover of $\mathcal{Q}$:
\begin{align*}
\mathcal{Q}=\bigcup_{({r_0},{\beta})\in\mathcal{Q}\times\mathcal{B}\colon{r_0\in{U_\beta}}}{\bigg({U_{\beta}\cap{B\left({r_0},{\widetilde{\delta}_{{\epsilon},{r_0},{s({r_0})}}}\right)}}\bigg)}
\end{align*}
It should be noted that this cover {\emph{does}} depend on $\epsilon$. Since $\mathcal{Q}$ is a metric space and thus a paracompact Hausdorff space, it admits partitions of unity with respect to any open cover. Hence, for any ${\epsilon}>0$, we find a collection $(\phi_{\alpha})_{\alpha\in\mathcal{A}}$ of continuous functions $\phi_\alpha\colon\mathcal{Q}\to[{0,1}]$, such that:
\begin{itemize}
\item{for all $\alpha\in\mathcal{A}$ there exist $r^{({\alpha})}\in\mathcal{Q}$ and $\beta_\alpha\in\mathcal{B}$, such that $r^{({\alpha})}\in{U}_{\beta_\alpha}$ and
\begin{align*}
\mathrm{supp}({\phi_\alpha})\subseteq{U_{\beta_\alpha}\cap{B\left({r^{({\alpha})}},{\widetilde{\delta}_{{\epsilon},{r^{({\alpha})}},{s({r^{({\alpha})}})}}}\right)}}
\end{align*}
}
\item{for all $r\in\mathcal{Q}$ there exists an open neighborhood $N_r$, such that $\phi_\alpha\not\equiv{0}$ on $N_r$ for only finitely many $\alpha\in\mathcal{A}$,}
\item{$\sum_{\alpha\in\mathcal{A}}{\phi_\alpha}\equiv{1}$ on $\mathcal{Q}$.}
\end{itemize}
It should be noted that $(\phi_{\alpha})_{\alpha\in\mathcal{A}}$ and the other occurring objects depend on $\epsilon$, since the cover of $\mathcal{Q}$ depends on $\epsilon$. If $r\in\mathcal{Q}$ satisfies ${\phi_\alpha}(r)\neq{0}$ for some $\alpha\in\mathcal{A}$, then $\delta{(r,{r^{({\alpha})}})}<{\widetilde{\delta}_{{\epsilon},{r^{({\alpha})}},{s({r^{({\alpha})}})}}}$.
Hence, by Lemma \ref{einfuehrungws}, we have (if ${\phi_\alpha}(r)\neq{0}$):
\begin{align*}
\overline{\Omega^{(r)}} & \subseteq{}\Omega_{{r^{({\alpha})}},{s({r^{({\alpha})}})},{\eta}_{{\epsilon},{r^{({\alpha})}},{s({r^{({\alpha})}})}}}\text{,}\\
{\Omega^{(r)}}({\epsilon}) & \supseteq\overline{\Omega_{{r^{({\alpha})}},{s({r^{({\alpha})}})},{\eta}_{{\epsilon},{r^{({\alpha})}},{s({r^{({\alpha})}})}}}}\text{.}
\end{align*}
For ease of notation, we denote the operator
\begin{align*}
S_{{{r^{({\alpha})}},{s({r^{({\alpha})}})},{\eta}_{{\epsilon},{r^{({\alpha})}},{s({r^{({\alpha})}})}}}}\colon\mathcal{C}_{0,1}^{0}\left(\overline{\Omega_{{{r^{({\alpha})}},{s({r^{({\alpha})}})},{\eta}_{{\epsilon},{r^{({\alpha})}},{s({r^{({\alpha})}})}}}}}\right)\to{}\mathcal{C}^0\left({\Omega_{{{r^{({\alpha})}},{s({r^{({\alpha})}})},{\eta}_{{\epsilon},{r^{({\alpha})}},{s({r^{({\alpha})}})}}}}}\right)
\end{align*}
obtained from Lemma \ref{einfuehrungws} as $S_{({\alpha},{\epsilon})}$. Hence (if ${\phi_\alpha}(r)\neq{0}$) we can take any $f\in{\mathcal{C}_{0,1}^{0}(\Omega^{(r)}({\epsilon}))}$ and apply the operator $S_{({\alpha},{\epsilon})}$ to obtain a welldefined element of $\mathcal{C}^0\left(\overline{\Omega^{(r)}}\right)$. This shows that the following operators are welldefined:

For $r\in\mathcal{Q}$ and $\epsilon{}>0$ we define the operator 
\begin{align*}
\mathcal{S}^{r,{\epsilon}}\colon{}\mathcal{C}_{0,1}^{0}(\Omega^{(r)}({\epsilon}))\to{}\mathcal{C}^0\left(\overline{\Omega^{(r)}}\right)
\end{align*}
by
\begin{align*}
f\mapsto\sum_{\alpha\in\mathcal{A}\colon{\phi_\alpha}(r)\neq{0}}{{\phi_\alpha}(r)\cdot{S_{({\alpha},{\epsilon})}}(f)}
\end{align*}
We have to show that these operators have the desired properties.
\begin{proof}[Proof of Theorem \ref{finalstuff}]
By construction, Properties \ref{linearinfinalstuff}, \ref{regularinfinalstuff} and \ref{solutioninfinalstuff} in Theorem \ref{finalstuff} are immediate from the corresponding properties of the operators in Lemma \ref{einfuehrungws}. Property \ref{contdependenceindbarresult} follows from a long and tedious (but straight forward) calculation making use of the fact that $\phi_\alpha\colon\mathcal{Q}\to[{0,1}]$ is continuous for all $\alpha\in\mathcal{A}$. It should be noted, however, that we make use of the $\mathcal{C}_{0,1}^{1}$-regularity assumption in Property \ref{contdependenceindbarresult} in order to use the estimates for the operators from Lemma \ref{einfuehrungws}. It remains to prove Property \ref{estimateinfinalstuff}.

Since $(U_{\beta})_{\beta\in\mathcal{B}}$ is a refinement of $(\mathcal{O}_k)_{k\in\mathbb{Z}_{>0}}$, there exists a map $\tau\colon\mathcal{B}\to\mathbb{Z}_{>0}$, such that $U_{\beta}\subseteq\mathcal{O}_{\tau{({\beta})}}$ for all $\beta\in\mathcal{B}$. Consider any $r\in\mathcal{Q}$. Since $(U_{\beta})_{\beta\in\mathcal{B}}$ is locally finite, we find an open neighborhood $V_r$ of $r$ in $\mathcal{Q}$ and $\beta_{r,1},{\dots},\beta_{r,m_r}\in\mathcal{B}$, such that (for $\beta\in\mathcal{B}$) we have $V_r\cap{U_\beta}\neq\emptyset$ if and only if $\beta\in\{\beta_{r,1},{\dots},\beta_{r,m_r}\}$. If $r\not\in\overline{U_{\beta_{r,j}}}$ for some $j\in\{1,{\dots},m_r\}$, then we can replace $V_r$ by $V_r\setminus\overline{U_{\beta_{r,j}}}$; hence we can assume that $r\in\overline{U_{\beta_{r,j}}}$ for all $j\in\{1,{\dots},m_r\}$. Now we define
\begin{align*}
M_r:=\max\{{\tau}(\beta_{r,1}),{\dots},{\tau}(\beta_{r,m_r})\}\in\mathbb{Z}_{>0}\text{.}
\end{align*}
It is important to note that $M_r$ does not depend on $\epsilon$, since $(U_{\beta})_{\beta\in\mathcal{B}}$ is independent from $\epsilon$. Now we consider the collection $(\phi_{\alpha})_{\alpha\in\mathcal{A}}$, which {\emph{does}} depend on $\epsilon$. If ${\phi_\alpha}(r)\neq{0}$ for some $\alpha\in\mathcal{A}$, then $r\in{U}_{\beta_\alpha}$ and thus ${\beta_\alpha}\in\{\beta_{r,1},{\dots},\beta_{r,m_r}\}$. By definition of $M_r$ we get ${\tau}({{\beta_\alpha}})\leq{M_r}$ and hence
\begin{align*}
r^{({\alpha})}\in{U}_{\beta_\alpha}\subseteq\mathcal{O}_{\tau{({\beta_\alpha})}}\subseteq\mathcal{O}_{M_r}\text{.}
\end{align*}
This implies that $\widehat{M}_{s({r^{({\alpha})}})}=k_{r^{({\alpha})}}\leq{M_r}$, whenever ${\phi_\alpha}(r)\neq{0}$. We compute for $f\in\mathcal{C}_{0,1}^{1}(\Omega^{(r)}({\epsilon}))$:
\begin{align*}
\left\Vert\mathcal{S}^{r,{\epsilon}}(f)\right\Vert_{\overline{\Omega^{(r)}}} & \leq\sum_{\alpha\in\mathcal{A}\colon{\phi_\alpha}(r)\neq{0}}{{\phi_\alpha}(r)\cdot\left\Vert{{S_{({\alpha},{\epsilon})}}(f)}\right\Vert_{\overline{\Omega^{(r)}}}}\\
& \leq\sum_{\alpha\in\mathcal{A}\colon{\phi_\alpha}(r)\neq{0}}{{\phi_\alpha}(r)\cdot\widehat{M}_{s({r^{({\alpha})}})}\cdot\Vert{}f\Vert_{\Omega^{(r)}({\epsilon})}}\\
& \leq\sum_{\alpha\in\mathcal{A}\colon{\phi_\alpha}(r)\neq{0}}{{\phi_\alpha}(r)\cdot{M}_{r}\cdot\Vert{}f\Vert_{\Omega^{(r)}({\epsilon})}}\\
& ={M}_{r}\cdot\Vert{}f\Vert_{\Omega^{(r)}({\epsilon})}
\end{align*}
Hence the map $\mathcal{M}\colon\mathcal{Q}\to\mathbb{R}_{>0}$, $r\mapsto{M_r}$ does not depend on $\epsilon$ and satisfies the estimate in Property \ref{estimateinfinalstuff}. So it suffices to show that there exists a {\emph{continuous}} map $C\colon{\mathcal{Q}}\to{}\mathbb{{R}}_{{>0}}$, such that $C(r)\geq{M_r}$ for all $r\in\mathcal{Q}$. Since $\mathcal{Q}$ is a metric space, we only have to show that $\mathcal{M}$ is upper semicontinuous.

To this end let $r\in\mathcal{Q}$ and let $V_r$ be the open neighborhood introduced above. Consider $q\in{V_r}$. It is enough to show that $M_q\leq{M_r}$. We have $q\in\overline{U_{\beta_{q,j}}}$ for all $j\in\{1,{\dots},m_q\}$; so, since $q\in{V_r}$, we get
\begin{align*}
V_r\cap\overline{U_{\beta_{q,j}}}\neq\emptyset\text{ for all }j\in\{1,{\dots},m_q\}\text{.}
\end{align*}
So, since $V_r$ is open, we also have:
\begin{align*}
V_r\cap{U_{\beta_{q,j}}}\neq\emptyset\text{ for all }j\in\{1,{\dots},m_q\}\text{.}
\end{align*}
The defining property of $V_r$ then immediately gives
\begin{align*}
\{\beta_{q,1},{\dots},\beta_{q,m_q}\}\subseteq{}\{\beta_{r,1},{\dots},\beta_{r,m_r}\}\text{,}
\end{align*}
which implies $M_q\leq{M_r}$, as desired.
\end{proof}

\section{Technical Lemmas and Additive Splitting with Parameters}
\label{technicalsection}

This section is devoted to stating and proving some lemmas which are important for the proof of our main result, Theorem \ref{maintheorem}.

\theoremstyle{plain}
\newtheorem{master39}[propo]{Lemma}
\begin{master39}
\label{master39}
There exists a constant $\mathrm{const}_{n}>0$, depending only on $n\in\mathbb{Z}_{\geq{1}}$, with the following property:

Let $V$ be a nonempty open subset of $\mathbb{C}^n$, let $d>0$, let $x,y\in{}V$ and let $F\colon{V}\to\mathbb{C}^n$ be holomorphic and bounded. Assume that the real line segment $S:=\{tx+(1-t)y\colon{}t\in{}[0,1]\}$ between $x$ and $y$ satisfies $S(d)\subseteq{V}$. Then we have:
\begin{align*}
\Vert{}F(y)-F(x)\Vert\leq\mathrm{const}_n\cdot\frac{\Vert{F}\Vert_V}{d}\cdot\Vert{y-x}\Vert\text{.}
\end{align*}
\end{master39}

\begin{proof}
This is obvious from the Cauchy estimates.
\end{proof}

\theoremstyle{plain}
\newtheorem{DDD}[propo]{Lemma}
\begin{DDD}
\label{DDD}
There exists a constant $K>0$, depending only on $n\in\mathbb{Z}_{\geq{1}}$, with the following property:

If $D$ is a nonempty open subset of $\mathbb{C}^n$, $r{}>0$ and $c\colon{}D({r})\to{\mathbb{C}^n}$ is a holomorphic mapping with $||c||_{D({r})}\leq{}K\cdot{}r$, then the following map is (holomorphic and) injective:
\begin{align*}
\mathcal{C}\colon{D}\to{}\mathbb{C}^n, z\mapsto{}z+c(z)\text{.}
\end{align*}
\end{DDD}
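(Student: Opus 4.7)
The plan is a standard contraction-type argument based purely on the Cauchy estimates already packaged in Lemma \ref{master39}. I would pick $K$ small enough that $c$, viewed as a perturbation of the identity, is genuinely Lipschitz with constant $<1$ on any pair of points $z,w \in D$ for which the line segment $[z,w]$ can be guaranteed to lie in $D(r)$; and I would use the sup-norm bound on $c$ itself to ensure that, on the coincidence set $\{\mathcal{C}(z)=\mathcal{C}(w)\}$, the pair $z,w$ is automatically close enough for this to happen. Concretely, I would set $K := \min\{1/4,\,1/(4\,\mathrm{const}_n)\}$, where $\mathrm{const}_n$ is the constant from Lemma \ref{master39}; holomorphy of $\mathcal{C}$ is then automatic, so everything reduces to injectivity.

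For the key step, suppose $z,w \in D$ with $\mathcal{C}(z) = \mathcal{C}(w)$. Then $z-w = c(w)-c(z)$, so
\[
 \|z-w\| \;\leq\; 2\|c\|_{D(r)} \;\leq\; 2Kr \;\leq\; r/2.
\]
This preliminary bound forces the real segment $S = \{tz+(1-t)w : t \in [0,1]\}$ to lie inside $D(r/2)$, and hence $S(r/2) \subseteq D(r)$. This is precisely where the fact that $c$ is defined on the enlarged set $D(r)$ (rather than just on $D$) is used: $D$ itself need not be convex, so $S$ may well leave $D$, but $S$ and a small tube around it still lie in the region where $c$ is holomorphic and bounded.

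Applying Lemma \ref{master39} to $F = c$, $V = D(r)$, $d = r/2$ then gives
\[
 \|c(z)-c(w)\| \;\leq\; \mathrm{const}_n \cdot \frac{\|c\|_{D(r)}}{r/2} \cdot \|z-w\| \;\leq\; 2K\,\mathrm{const}_n \cdot \|z-w\|.
\]
Combined with $\|z-w\| = \|c(w)-c(z)\|$ (from $\mathcal{C}(z)=\mathcal{C}(w)$), this yields $\|z-w\| \leq 2K\,\mathrm{const}_n \cdot \|z-w\|$, and the choice $K < 1/(2\,\mathrm{const}_n)$ forces $z=w$. The proof is genuinely routine; the only point requiring a second of care is the non-convexity of $D$, which is dealt with exactly by the preliminary estimate $\|z-w\| \leq r/2$ together with the fact that $c$ is defined on $D(r)$ rather than on $D$.
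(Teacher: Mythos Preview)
Your argument is correct and is essentially what the paper has in mind: the paper's proof is the single line ``This is obvious from the Cauchy estimates,'' and you have simply unpacked that by invoking Lemma~\ref{master39} (itself a Cauchy-estimate consequence) together with the preliminary bound $\|z-w\|\leq 2Kr$ to handle the non-convexity of $D$. There is nothing to add.
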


\begin{proof}
This is obvious from the Cauchy estimates.
\end{proof}

If $(\Omega_\zeta)_{\zeta\in\mathcal{P}}$ is a family of nonempty bounded strictly pseudoconvex domains in $\mathbb{C}^n$ with boundary of class $\mathcal{C}^2$ depending continuously on $\zeta\in\mathcal{P}$ in the sense of Definition \ref{contvaryingstrpscx}, then it is not immediately clear how to apply Theorem \ref{finalstuff} to obtain solution operators to $\overline{\partial}$ that depend continuously on $\zeta$. That is why we need the following lemma.

\theoremstyle{plain}
\newtheorem{passingtoalternativedefinition}[propo]{Lemma}
\begin{passingtoalternativedefinition}
\label{passingtoalternativedefinition}
Let $\mathcal{P}$ be a nonempty topological space and let $(\Omega_\zeta)_{\zeta\in\mathcal{P}}$ be a family of nonempty bounded strictly pseudoconvex domains in $\mathbb{C}^n$ with boundary of class $\mathcal{C}^2$ depending continuously on $\zeta\in\mathcal{P}$ in the sense of Definition \ref{contvaryingstrpscx}. Additionally assume that $\mathcal{P}$ is compact. Then there exist $\tau_0{}>0$ and a {\emph{continuous}} map
\begin{align*}
\mathcal{R}\colon\mathcal{P}\times[{0,\tau_0}]\to\mathcal{Q}\text{,}
\end{align*}
such that:
\begin{itemize}
\item{$\Omega_\zeta =\Omega^{({\mathcal{R}({\zeta},0)})}$ for all $\zeta\in\mathcal{P}$,}
\item{$\Omega_\zeta ({\epsilon})=\Omega^{({\mathcal{R}({\zeta},{\epsilon})})}$ for all $\zeta\in\mathcal{P}$, $\epsilon\in{}({0,\tau_0}]$.}
\end{itemize}
\end{passingtoalternativedefinition}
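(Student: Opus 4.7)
The plan is to define $\mathcal{R}(\zeta,\epsilon)$ as a post-composition of the signed distance function $\rho_{\Omega_\zeta}$ with a fixed, one-variable $\mathcal{C}^2$ profile that equals the identity near $0$ and stabilizes to the constants $\pm 3\tau_0$ for $|s|\geq 3\tau_0$. Since $\rho_{\Omega_\zeta}$ is of class $\mathcal{C}^2$ only in a neighborhood of the boundary, this post-composition trick is precisely what is needed to turn it into a \emph{globally} $\mathcal{C}^2$-defining function for $\Omega_\zeta(\epsilon)$.

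First I choose $\tau_0$. By compactness of $\mathcal{P}$ and continuity of $d$, $d_*:=\min_\zeta d(\zeta)>0$. Combining this with the well known fact that the signed distance function of a strictly pseudoconvex $\mathcal{C}^2$-domain is strictly plurisubharmonic in a neighborhood of the boundary whose width is controlled by its $\mathcal{C}^2$-data, and combining the $\mathcal{C}^2$-continuity of $\zeta\mapsto\rho_{\Omega_\zeta}$ from Definition~\ref{contvaryingstrpscx}\ref{variescontwithC2norm} with a compactness argument, I obtain $\tau_0>0$ such that for every $\zeta\in\mathcal{P}$, $\rho_{\Omega_\zeta}$ is $\mathcal{C}^2$, strictly plurisubharmonic, and satisfies $d\rho_{\Omega_\zeta}\neq 0$ on all of $\mathrm{b}\Omega_\zeta(4\tau_0)=\{|\rho_{\Omega_\zeta}|<4\tau_0\}$. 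I then fix once and for all a $\mathcal{C}^2$-function $\psi\colon\mathbb{R}\to\mathbb{R}$ with $\psi(s)=s$ on $[-2\tau_0,2\tau_0]$, $\psi(s)=\pm 3\tau_0$ for $\pm s\geq 3\tau_0$, $\psi$ strictly increasing on $[-3\tau_0,3\tau_0]$, and $\psi'(\pm 3\tau_0)=\psi''(\pm 3\tau_0)=0$, and set
\begin{align*}
\mathcal{R}(\zeta,\epsilon)(z):=\psi\bigl(\rho_{\Omega_\zeta}(z)\bigr)-\epsilon.
\end{align*}
Global $\mathcal{C}^2$-smoothness is then immediate: on $\mathrm{b}\Omega_\zeta(4\tau_0)$ the function $\rho_{\Omega_\zeta}$ is $\mathcal{C}^2$, and so is the composition; off this set $\psi\circ\rho_{\Omega_\zeta}$ is locally constant ($\equiv\pm 3\tau_0$), and $\mathcal{C}^2$-matching across $\{|\rho_{\Omega_\zeta}|=3\tau_0\}$ is guaranteed by the vanishing of $\psi'$ and $\psi''$ at $\pm 3\tau_0$. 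For $\epsilon\in[0,\tau_0]$, strict monotonicity of $\psi$ together with $\psi(s)=s$ for $|s|\leq 2\tau_0$ gives $\{\mathcal{R}(\zeta,\epsilon)<0\}=\{\rho_{\Omega_\zeta}<\epsilon\}=\Omega_\zeta(\epsilon)$, and on a neighborhood of the level set $\{\rho_{\Omega_\zeta}=\epsilon\}$ we have $\mathcal{R}(\zeta,\epsilon)=\rho_{\Omega_\zeta}-\epsilon$, which directly yields nonvanishing differential and strict plurisubharmonicity on a neighborhood of $\mathrm{b}\Omega^{(\mathcal{R}(\zeta,\epsilon))}$; hence $\mathcal{R}(\zeta,\epsilon)\in\mathcal{Q}$ together with the two identifications claimed.

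The main obstacle is verifying continuity of $(\zeta,\epsilon)\mapsto\mathcal{R}(\zeta,\epsilon)$ into $(\mathcal{Q},\delta)$. Since $\delta$ is built from $\mathcal{C}^2$-norms on the balls $B_j$ and the $\epsilon$-dependence is trivial (subtraction of a constant), it reduces to showing, for every $j$, that $\zeta\mapsto\psi\circ\rho_{\Omega_\zeta}$ is continuous in $\mathcal{C}^2(B_j)$. I fix $\zeta_0\in\mathcal{P}$ and, shrinking $\tau_0$ further if necessary, arrange that $\mathrm{b}\Omega_\zeta(3\tau_0)\subseteq\mathrm{b}\Omega_{\zeta_0}(d(\zeta_0)/2)$ for $\zeta$ in some neighborhood of $\zeta_0$ (possible by Definition~\ref{contvaryingstrpscx}\ref{technicalinclusion}). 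Outside this last set, both $\psi\circ\rho_{\Omega_\zeta}$ and $\psi\circ\rho_{\Omega_{\zeta_0}}$ are locally constant with matching values $\pm 3\tau_0$ (the signs agree because the boundaries are close in Hausdorff distance), so the pointwise difference and all its derivatives vanish identically there; inside, the $\mathcal{C}^2$-continuity of $\zeta\mapsto\rho_{\Omega_\zeta}$ granted by Definition~\ref{contvaryingstrpscx}\ref{variescontwithC2norm}, composed with the smooth $\psi$, yields $\mathcal{C}^2$-closeness of $\psi\circ\rho_{\Omega_\zeta}$ to $\psi\circ\rho_{\Omega_{\zeta_0}}$. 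The delicate point is pasting these two regimes into a single $\mathcal{C}^2(B_j)$-estimate on all of $B_j$; this is exactly where the conditions $\psi'(\pm 3\tau_0)=\psi''(\pm 3\tau_0)=0$ do the work, ensuring that all derivatives decay to $0$ as one approaches the interface $\{|\rho_{\Omega_\zeta}|=3\tau_0\}$ from the non-constant side and therefore match the constant regime on the other side.
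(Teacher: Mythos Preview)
Your overall strategy---post-composing the signed distance function with a fixed one-variable profile that flattens out away from zero---is exactly the paper's, and your handling of the $\mathcal{C}^2$-gluing and of continuity into $(\mathcal{Q},\delta)$ is along the right lines. The gap is in your choice of profile near $0$. You invoke as ``well known'' that the signed distance function $\rho_{\Omega_\zeta}$ of a strictly pseudoconvex $\mathcal{C}^2$-domain is itself strictly plurisubharmonic near the boundary, but this is false in general: strict pseudoconvexity controls the complex Hessian of $\rho_{\Omega_\zeta}$ only on the \emph{complex tangent space}, while in the complex normal direction it is governed by the (unconstrained) real curvature of $\mathrm{b}\Omega_\zeta$ in the $J\nu$-direction. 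For instance, if a domain in $\mathbb{C}^2_{z,w}$ (with $w=x+iy$) locally near the origin has defining function $r=|z|^2-Cx^2-y$ with $C>0$, then $0$ is a strictly pseudoconvex boundary point (the Levi form on the complex tangent line $\{w=0\}$ is $r_{z\bar z}=1>0$), yet the signed distance function agrees with $r$ to second order at $0$ and hence satisfies $\rho_{w\bar w}(0)=-C/2<0$. With $\psi(t)=t$ near $0$, your $\mathcal{R}(\zeta,\epsilon)=\rho_{\Omega_\zeta}-\epsilon$ therefore need not be strictly plurisubharmonic near $\mathrm{b}\Omega_\zeta(\epsilon)$ and need not lie in $\mathcal{Q}$.

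This is precisely why the paper takes $\psi(t)=e^{At}-1$ near $0$ for $A$ large: from $\partial\bar\partial(\psi\circ\rho)=\psi'(\rho)\,\partial\bar\partial\rho+\psi''(\rho)\,\partial\rho\wedge\bar\partial\rho$ one sees that a sufficiently convex $\psi$ adds a large positive contribution in the complex normal direction and forces strict plurisubharmonicity of $\psi\circ\rho_{\Omega_\zeta}$ near the boundary regardless of the sign of the normal--normal entry of $\partial\bar\partial\rho_{\Omega_\zeta}$. Replacing your $\psi(t)=t$ by $e^{At}-1$ near $0$ (and correspondingly shifting by $-e^{A\epsilon}+1$ instead of $-\epsilon$) repairs the gap; the remainder of your argument then essentially coincides with the paper's.
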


\begin{proof}
If $\mu{}>0$ is chosen small enough, ${\tau_0}:={\mu}/2^{10}$ and $A>1$ is chosen large enough, then the map $\mathcal{R}\colon\mathcal{P}\times[{0,\tau_0}]\to\mathcal{Q}$ given by
\begin{align*}
({\zeta},{\tau})\mapsto\Big({\colon\mathbb{C}^n\to\mathbb{R}, z\mapsto{-\exp(A\tau)+1+(\psi\circ{\rho_{\Omega_\zeta}})(z)}}\Big)
\end{align*}
is welldefined and continuous and has the desired properties, where $\rho_{\Omega_\zeta}$ is as in Definition \ref{contvaryingstrpscx} and $\psi\colon\mathbb{R}\to\mathbb{R}$ is a function with the following properties:
\begin{itemize}
\item{$\psi$ is of class $\mathcal{C}^{\infty}$,}
\item{${\psi}(t)=\exp(At)-1$ for all $t\in{[{-4{\mu},4{\mu}}]}$,}
\item{$\psi$ is increasing on $\mathbb{R}$ and {\emph{strictly}} increasing on both $[{-5{\mu},-4{\mu}}]$ and $[{4{\mu},5{\mu}}]$,}
\item{$\psi$ is constant on both $(-{\infty},-6{\mu}]$ and $[6{\mu},{\infty})$,}
\item{$\exp(A\cdot{(-{7\mu})})-1\leq{\psi}(t)\leq\exp(A\cdot{7\mu})-1$ for all $t\in\mathbb{R}$.}
\end{itemize}
This follows from a long and tedious calculation using compactness of $\mathcal{P}$ and the defining properties in Definition \ref{contvaryingstrpscx}.
\end{proof}

The following lemma concerns the existence of certain cutoff functions that are well-behaved with respect to a parameter. 

\theoremstyle{plain}
\newtheorem{propertiesonpagem54}[propo]{Lemma}
\begin{propertiesonpagem54}
\label{propertiesonpagem54}
Let $\mathcal{P}$ be a nonempty compact topological space and let ${((A_\zeta{},B_\zeta{}))}_{\zeta\in\mathcal{P}}$ be admissible. Then there exist a $\widetilde{\tau}>0$ and a map $\chi\colon\mathbb{C}^n\times\mathcal{P}\to\mathbb{R}$ with the following properties:
\begin{enumerate}
\item{$\chi$ is continuous and $0\leq{\chi}\leq{1}$ everywhere,}
\item\label{cutoffzwei}{$(A_{\zeta}({\tau}))\cap{}(B_{\zeta}({\tau}))=C_{\zeta}({\tau})$ for all $\zeta\in\mathcal{P}$, $\tau\in{}({0,\widetilde{\tau}}]$,}
\item\label{cutoffdrei}{for all $\zeta\in\mathcal{P}$, $\tau\in{}({0,\widetilde{\tau}}]$ we have $\inf_{({\alpha},{\beta})}{\Vert{{\alpha}-{\beta}}\Vert}>64\widetilde{\tau}$, where the infimum is taken over all $({\alpha},{\beta})\in{(\overline{A_{\zeta}\setminus{}B_{\zeta}})({\tau})}\times{(\overline{B_{\zeta}\setminus{}A_{\zeta}})({\tau})}$,}
\item{for all $\zeta\in\mathcal{P}$, $\tau\in{}({0,\widetilde{\tau}}]$ we have $\chi{}({\cdot},{\zeta})\equiv{1}$ in a neighborhood of $\overline{{(\overline{A_{\zeta}\setminus{}B_{\zeta}})({\tau})}}$ and $\chi{}({\cdot},{\zeta})\equiv{0}$ in a neighborhood of $\overline{{(\overline{B_{\zeta}\setminus{}A_{\zeta}})({\tau})}}$,}
\item{$\chi{}({\cdot},{\zeta})\colon\mathbb{C}^n\to\mathbb{R}$ is of class $\mathcal{C}^{\infty}$ for all $\zeta\in\mathcal{P}$,}
\item{the map $\colon\mathcal{P}\to\mathbb{R}$, $\zeta\mapsto{\left\Vert{\overline{\partial}({\chi}({\cdot},{\zeta}))}\right\Vert_{\mathbb{C}^n}}$ is welldefined (i.e.\ ${\left\Vert{\overline{\partial}({\chi}({\cdot},{\zeta}))}\right\Vert_{\mathbb{C}^n}}<\infty$ for all $\zeta$) and bounded,}
\item{for all $k\in\{1,{\dots},n\}$ the map
\begin{align*}
\colon\mathbb{C}^n\times\mathcal{P}\to\mathbb{C}\text{, }(p,{\zeta})\mapsto\frac{\partial\chi{({\cdot},{\zeta})}}{\partial\overline{z_k}}(p)
\end{align*}
is continuous.}
\end{enumerate}
\end{propertiesonpagem54}

\begin{proof}
In the case without parameters, this is a standard construction using mollifiers. But, by compactness of $\mathcal{P}$, the defining properties in Definition \ref{defadmissible} and the well-known properties of both the Hausdorff distance and the standard mollifier, the construction can easily be adapted to the parameter case.
\end{proof}

The following lemma will help with the estimates in the proof of Theorem \ref{maintheorem}.

\theoremstyle{plain}
\newtheorem{GEHTJ}[propo]{Lemma}
\begin{GEHTJ}
\label{GEHTJ}
There exists a map $\rho\colon{\mathbb{R}_{>0}\times\mathbb{R}_{\geq{1}}\times\mathbb{R}_{\geq{1}}}\to\mathbb{R}_{>0}$ with the following property:

If $(a,B,C)\in{}\mathbb{R}_{>0}\times\mathbb{R}_{\geq{1}}\times\mathbb{R}_{\geq{1}}$ and if $({{\epsilon}_m})_{m\in\mathbb{Z}_{\geq{0}}}$ is a sequence of non-negative real numbers satisfying
\begin{itemize}
\item{$0\leq{}{\epsilon}_0<{\rho}(a,B,C)$,}
\item{${\epsilon}_{m+1}\leq{C\cdot\frac{2^{3m}{\epsilon{}_m}^2}{a}}$ for all $m\in\mathbb{Z}_{\geq{0}}$,}
\end{itemize}
then we have for all $m\in\mathbb{Z}_{\geq{0}}$:
\begin{align*}
16B{\epsilon}_m{}<\frac{a}{2^{3m}}\text{.}
\end{align*}
\end{GEHTJ}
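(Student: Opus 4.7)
The plan is to reduce the nonlinear iteration $\epsilon_{m+1}\le\frac{C\cdot 2^{3m}\epsilon_m^{2}}{a}$ to a pure quadratic recursion $u_{m+1}\le u_m^{2}$ by an appropriate rescaling, and then to choose $\rho(a,B,C)$ small enough that the initial term $u_0$ lies strictly below both $1$ (so that the rescaled sequence is non-increasing) and a certain threshold corresponding exactly to the target inequality.

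Concretely, I would set
\begin{align*}
u_m\;:=\;\frac{8C}{a}\cdot 2^{3m}\,\epsilon_m\qquad(m\in\mathbb{Z}_{\ge 0}).
\end{align*}
A direct computation from the hypothesis gives $u_{m+1}\le u_m^{2}$, while multiplying both sides of the target inequality $16B\epsilon_m<a/2^{3m}$ by $8C/a$ shows that it is equivalent to $u_m<C/(2B)$. Thus the whole problem is to ensure $u_m<C/(2B)$ for every $m$.

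I would then define
\begin{align*}
\rho(a,B,C)\;:=\;\min\!\Bigl(\tfrac{a}{8C},\ \tfrac{a}{16B}\Bigr).
\end{align*}
The assumption $0\le\epsilon_0<\rho(a,B,C)$ forces simultaneously $u_0<1$ and $u_0<C/(2B)$. A straightforward induction using $u_{m+1}\le u_m^{2}$ yields $u_m\le u_0^{2^{m}}$ for all $m$; combined with $u_0<1$ this gives $u_m\le u_0<C/(2B)$ for every $m\ge 0$, which is what we need.

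There is no real obstacle beyond identifying the correct rescaling; after that, everything reduces to elementary inequalities. The only mild subtlety is that the target threshold $C/(2B)$ may be larger or smaller than $1$, which is why $\rho$ is defined as the minimum of two quantities: the bound $a/(8C)$ is what guarantees $u_0<1$ (and hence that the quadratic iteration is contracting), while $a/(16B)$ is what encodes the actual target inequality; taking the minimum handles both regimes uniformly.
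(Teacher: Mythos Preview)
Your argument is correct: the rescaling $u_m=\frac{8C}{a}\,2^{3m}\epsilon_m$ indeed converts the hypothesis into $u_{m+1}\le u_m^{2}$ and the conclusion into $u_m<C/(2B)$, and your choice of $\rho(a,B,C)=\min\bigl(\tfrac{a}{8C},\tfrac{a}{16B}\bigr)$ makes both $u_0<1$ and $u_0<C/(2B)$ hold, from which $u_m\le u_0^{2^m}\le u_0<C/(2B)$ follows. The paper itself omits the proof entirely as an elementary calculation, so there is nothing to compare against; your substitution is the standard way to handle such quadratic recursions.
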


The proof is an elementary calculation and will be omitted. The next lemma says that, roughly speaking, compositions are well-behaved under uniform convergence.

\theoremstyle{plain}
\newtheorem{master321}[propo]{Lemma}
\begin{master321}
\label{master321}
Let $\emptyset\neq{}U,V\Subset\mathbb{C}^n$ be open and let $W\Subset{V}$. Assume
\begin{align*}
(f_m & \colon{}U\to\mathbb{C}^n)_{m\in\mathbb{Z}_{\geq{0}}}\text{,}\\
(g_m & \colon{V}\to\mathbb{C}^n)_{m\in\mathbb{Z}_{\geq{0}}}\text{,}
\end{align*}
are sequences of continuous maps such that:
\begin{itemize}
\item{$f_m(U)\subseteq{}W$ for all $m\in\mathbb{Z}_{\geq{0}}$,}
\item{$(f_m)_{m\in\mathbb{Z}_{\geq{0}}}$ converges uniformly on $U$ to a (continuous) map $f\colon{U}\to\mathbb{C}^n$,}
\item{$(g_m)_{m\in\mathbb{Z}_{\geq{0}}}$ converges uniformly on the smaller set $W$ to a continuous map $g\colon{V}\to\mathbb{C}^n$.}
\end{itemize}
Then $g\circ{f}$ is welldefined (i.e.\ $f(U)\subseteq{V}$) and $(g_m\circ{f_m})_{m\in\mathbb{Z}_{\geq{0}}}$ converges uniformly on $U$ to $g\circ{f}$.
\end{master321}

The proof is an elementary calculation and will be omitted.

\theoremstyle{plain}
\newtheorem{master312}[propo]{Lemma}
\begin{master312}
\label{master312}
Let $D$ be a nonempty open subset of $\mathbb{C}^n$ and let $\epsilon{},\delta\in\mathbb{R}$ satisfy $0<\epsilon{}<\delta$. Assume $\Phi{}\colon{}D(\delta{})\to{}\mathbb{C}^n$ is an injective holomorphic mapping with $\left\Vert{\Phi{}-\text{{\emph{Id}}}}\right\Vert_{D({\delta})}<{\epsilon}$. Then we have:
\begin{align*}
D({\delta}-{\epsilon})\subseteq\Phi{}(D({\delta}))\text{.}
\end{align*}
\end{master312}

\begin{proof}
Let $x\in{D({\delta}-{\epsilon})}$ and let $\Omega$ be the open ball of radius $\epsilon$ around $x$ in $\mathbb{C}^n$. Set
\begin{align*}
f\colon\overline{\Omega}\to\mathbb{C}^n, z\mapsto\Phi{}(z)\text{,}
\end{align*}
and consider
\begin{align*}
F\colon{}\overline{\Omega}\times{}[0,1]\to\mathbb{C}^n, (z,t)\mapsto{}tz+(1-t)f(z)\text{.}
\end{align*}
$F$ is a smooth homotopy between $f$ and $\text{Id}_{\overline{\Omega}}$ and the mapping
\begin{align*}
\mathcal{H}\colon{}[0,1]\to{}\mathcal{C}^1(\overline{\Omega};\mathbb{C}^n), t\mapsto{}F(\cdot{},t)
\end{align*}
is continuous, where $\mathcal{C}^1(\overline{\Omega};\mathbb{C}^n)$ is equipped with the usual topology. But $x$ is a regular value of both $f$ and $\text{Id}_{\overline{\Omega}}$ and we furthermore have $x\notin{}F(\text{b}\Omega,t)$ for all $t\in{}[0,1]$, since $\left\Vert{\Phi{}-\text{{{Id}}}}\right\Vert_{D({\delta})}<{\epsilon}$. Hence the $\mathcal{C}^1$-mapping degrees of $f$ and $\text{Id}_{\overline{\Omega}}$ are (welldefined and) equal, which implies the claim.
\end{proof}

The following lemma is a version of \cite[Lemma 8.7.4 on p.\ 360]{FF} in the special case of Euclidean space.

\theoremstyle{plain}
\newtheorem{master311}[propo]{Lemma}
\begin{master311}
\label{master311}
There is a constant $M_2\geq{1}$, depending only on $n\in\mathbb{Z}_{\geq{1}}$, such that the following holds:

If $V$ is a nonempty open subset of $\mathbb{C}^n$ and if $\epsilon{},\delta\in\mathbb{R}$ satisfy $0<\epsilon{}<\frac{\delta}{4}$ and $\alpha{},\beta{},\gamma{}\colon{}V(\delta{})\to{}\mathbb{C}^n$ are injective holomorphic mappings with $\left\Vert{\alpha{}-\text{{\emph{Id}}}}\right\Vert_{V({\delta})},\left\Vert{\beta{}-\text{{\emph{Id}}}}\right\Vert_{V({\delta})},\left\Vert{\gamma{}-\text{{\emph{Id}}}}\right\Vert_{V({\delta})}<{\epsilon}$, then the mapping
\begin{align*}
\widetilde{\gamma}:=\beta{}^{-1}\circ\gamma\circ\alpha\colon{}V\to{}\mathbb{C}^n{} 
\end{align*}
is welldefined, injective and holomorphic. Writing {\em{
\begin{align*}
& \alpha{}=a+\text{Id}_{V(\delta{})}\text{,}  & \gamma{} &=c+\text{Id}_{V(\delta{})}\text{,}\\
& \beta{}=b+\text{Id}_{V(\delta{})}\text{,} & \widetilde{\gamma} &=\widetilde{c}+\text{Id}_{V}\text{,}
\end{align*}
}} we have
\begin{align*}
\Vert{}\widetilde{c}-(c+a-b)\Vert{}_V\leq{}M_2\frac{\epsilon{}^2}{\delta}\text{.}
\end{align*}
If furthermore $c=b-a$ on $V$, then we have
\begin{align*}
\Vert{}\widetilde{c}\Vert{}_V\leq{}M_2\frac{\epsilon{}^2}{\delta}\text{.}
\end{align*}
\end{master311}

\begin{proof}
Welldefinedness of $\widetilde{\gamma}$ follows from Lemma \ref{master312}. Since, in contrast to Forstneri\v{c}, we are working in $\mathbb{C}^n$, the estimates follow from an elementary calculation using Lemma \ref{master39}.
\end{proof}

\theoremstyle{definition}
\newtheorem{defholombounded}[propo]{Notation}
\begin{defholombounded}
\label{defholombounded}
If $U$ is a nonempty open subset of $\mathbb{C}^n$, then we write $\text{HB}(U)$ for the set of all holomorphic and bounded mappings $\Phi\colon{U}\to\mathbb{C}^n$.
\end{defholombounded}

The following lemma is based on \cite[Lemma 8.7.6 on p.\ 362]{FF} and constitutes the announced additive splitting. We follow the idea of the proof given there and adapt it to our situation. Regarding continuous dependence on a parameter, this lemma is the key ingredient, since it is the only point in the proof of our main result, Theorem \ref{maintheorem}, where we have to invoke Theorem \ref{finalstuff} in order to obtain solution operators to $\overline{\partial}$ depending continuously on the domain.

\theoremstyle{plain}
\newtheorem{lemmaonpagem55andm56}[propo]{Lemma}
\begin{lemmaonpagem55andm56}
\label{lemmaonpagem55andm56}
Let $\mathcal{P}$ be a nonempty compact topological space and let ${((A_\zeta{},B_\zeta{}))}_{\zeta\in\mathcal{P}}$ be admissible. Then there exist constants $M_3\geq{1}$ and ${\tau_0}>0$ and operators
\begin{align*}
\mathcal{E}_{\zeta}^{({\tau_1},{\tau_2})}\colon & \text{\emph{HB}}(C_{\zeta}({\tau_2}))\to\text{\emph{HB}}(A_{\zeta}({\tau_1}))\text{,}\\
\mathcal{Z}_{\zeta}^{({\tau_1},{\tau_2})}\colon & \text{\emph{HB}}(C_{\zeta}({\tau_2}))\to\text{\emph{HB}}(B_{\zeta}({\tau_1}))\text{,}
\end{align*}
where $\zeta\in\mathcal{P}$ and ${\tau_1},{\tau_2}\in\mathbb{R}$ with $0<{\tau_1}<{\tau_2}\leq{\tau_0}$, such that:
\begin{enumerate}
\item\label{m55m56property1}{If $\zeta\in\mathcal{P}$, $0<{\tau_1}<{\tau_2}\leq{\tau_0}$ and $c\in\text{\emph{HB}}(C_{\zeta}({\tau_2}))$, then we have:
\begin{align*}
c\equiv\mathcal{Z}_{\zeta}^{({\tau_1},{\tau_2})}(c)-\mathcal{E}_{\zeta}^{({\tau_1},{\tau_2})}(c)\text{ on }C_{\zeta}({\tau_1})\text{,}
\end{align*}
}
\item\label{m55m56property2}{$\mathcal{E}_{\zeta}^{({\tau_1},{\tau_2})}$ and $\mathcal{Z}_{\zeta}^{({\tau_1},{\tau_2})}$ are $\mathbb{C}$-linear and satisfy the following estimate:
\begin{align*}
\left\Vert{\mathcal{E}_{\zeta}^{({\tau_1},{\tau_2})}(c)}\right\Vert_{A_{\zeta}({\tau_1})} & \leq{M_3}\cdot\Vert{c}\Vert_{C_{\zeta}({\tau_2})}\text{ for all }c\in\text{\emph{HB}}(C_{\zeta}({\tau_2}))\text{,}\\
\left\Vert{\mathcal{Z}_{\zeta}^{({\tau_1},{\tau_2})}(c)}\right\Vert_{B_{\zeta}({\tau_1})} & \leq{M_3}\cdot\Vert{c}\Vert_{C_{\zeta}({\tau_2})}\text{ for all }c\in\text{\emph{HB}}(C_{\zeta}({\tau_2}))\text{,}
\end{align*}
}
\item\label{m55m56property3}{If ${\tau_1},{\tau_2}\in\mathbb{R}$ are fixed with $0<{\tau_1}<{\tau_2}\leq{\tau_0}$ and if $c\colon\{(z,{\zeta})\in\mathbb{C}^n\times\mathcal{P}\colon{z\in{C_{\zeta}({\tau_2})}}\}\to\mathbb{C}^n$ is a continuous map with $c({\cdot},{\zeta})\in\text{\emph{HB}}(C_{\zeta}({\tau_2}))$ for all $\zeta\in\mathcal{P}$, then the following two maps are welldefined and continuous:
\begin{align*}
a\colon\{(z,{\zeta})\in\mathbb{C}^n\times\mathcal{P}\colon{z\in{A_{\zeta}({\tau_1})}}\} & \to\mathbb{C}^n\text{, }(z,{\zeta})\mapsto{\left({\mathcal{E}_{\zeta}^{({\tau_1},{\tau_2})}(c({\cdot},{\zeta}))}\right)}(z)\text{,}\\
b\colon\{(z,{\zeta})\in\mathbb{C}^n\times\mathcal{P}\colon{z\in{B_{\zeta}({\tau_1})}}\} & \to\mathbb{C}^n\text{, }(z,{\zeta})\mapsto{\left({\mathcal{Z}_{\zeta}^{({\tau_1},{\tau_2})}(c({\cdot},{\zeta}))}\right)}(z)\text{.}
\end{align*}
}
\end{enumerate}
\end{lemmaonpagem55andm56}

\begin{proof}
By definition, $({\Omega_{\zeta}})_{\zeta\in\mathcal{P}}=(\text{Int}({A_\zeta\cup{}B_\zeta}))_{\zeta\in\mathcal{P}}$ is a family of nonempty bounded strictly pseudoconvex domains in $\mathbb{C}^n$ with boundary of class $\mathcal{C}^2$ depending continuously on $\zeta\in\mathcal{P}$ in the sense of Definition \ref{contvaryingstrpscx}. Let ${\tau_0}>0$ and $\mathcal{R}\colon\mathcal{P}\times[{0,\tau_0}]\to\mathcal{Q}$ be as in Lemma \ref{passingtoalternativedefinition}. By making $\tau_0$ smaller if necessary (which does not affect the conclusion of Lemma \ref{passingtoalternativedefinition} being true), we can assume that ${\tau_0}<\widetilde{\tau}$, where $\widetilde{\tau}$ is as in Lemma \ref{propertiesonpagem54}.

We want to define the operators. To this end, let $\zeta\in\mathcal{P}$, $0<{\tau_1}<{\tau_2}\leq{\tau_0}$ and $c\in\text{{HB}}(C_{\zeta}({\tau_2}))$. For $j\in\{1,{\dots},n\}$ we define a $(0,1)$-form on ${\Omega_\zeta}({\tau_2})$:
\begin{align*}
f^{(j,{\zeta},{\tau_1},{\tau_2},c)}:=
\begin{cases}
\overline{\partial}\Big(\chi{}(\cdot{},{\zeta})\cdot{}c_j\Big) & \text{on }C_{\zeta}({\tau_2})\text{,}\\
0 & \text{on } {\Omega}_{\zeta}({\tau_2})\setminus{}C_{\zeta}({\tau_2})\text{,}
\end{cases}
\end{align*}
where $c_j$ is the $j$-th component function of $c$ and $\chi$ is as in Lemma \ref{propertiesonpagem54}. Using Lemma \ref{propertiesonpagem54}, one readily checks that $f^{(j,{\zeta},{\tau_1},{\tau_2},c)}$ is welldefined, $f^{(j,{\zeta},{\tau_1},{\tau_2},c)}\in\mathcal{C}_{0,1}^{\infty}({\Omega}_{\zeta}({\tau_2}))$ and that $\overline{\partial}f^{(j,{\zeta},{\tau_1},{\tau_2},c)}=0$. Noting that ${\Omega_\zeta}({\tau_2})=({\Omega_\zeta}({\tau_1}))({\tau_2}-{\tau_1})=\Omega^{({\mathcal{R}({\zeta},{\tau_1})})}({\tau_2}-{\tau_1})$ and adopting the notation from Theorem \ref{finalstuff}, we can set:
\begin{align*}
g^{(j,{\zeta},{\tau_1},{\tau_2},c)}:=\mathcal{S}^{{\mathcal{R}({\zeta},{\tau_1})},{{\tau_2}-{\tau_1}}}\left(f^{(j,{\zeta},{\tau_1},{\tau_2},c)}\right)\in\mathcal{C}^{\infty}(\overline{\Omega^{({\mathcal{R}({\zeta},{\tau_1})})}})=\mathcal{C}^{\infty}(\overline{{\Omega}_{\zeta}({\tau_1})})\text{.}
\end{align*}
Let $g_{{\zeta},{\tau_1},{\tau_2},c}\colon\overline{{\Omega}_{\zeta}({\tau_1})}\to\mathbb{C}^n$ be the map whose $j$-th component function is $g^{(j,{\zeta},{\tau_1},{\tau_2},c)}$. We now define $\mathcal{Z}_{\zeta}^{({\tau_1},{\tau_2})}(c)\colon{}B_{\zeta}({\tau_1})\to\mathbb{C}^n$ and $\mathcal{E}_{\zeta}^{({\tau_1},{\tau_2})}(c)\colon{}A_{\zeta}({\tau_1})\to\mathbb{C}^n$ as follows:
\begin{align*}
\mathcal{Z}_{\zeta}^{({\tau_1},{\tau_2})}(c):=
\begin{cases}
-g_{{\zeta},{\tau_1},{\tau_2},c}+\chi{}(\cdot{},{\zeta})\cdot{}c & \text{on }B_{\zeta}({\tau_1})\cap{}C_{\zeta}({\tau_2})\text{,}\\
-g_{{\zeta},{\tau_1},{\tau_2},c} & \text{on } B_{\zeta}({\tau_1})\setminus{}C_{\zeta}({\tau_2})\text{,}
\end{cases}
\end{align*}
and
\begin{align*}
\mathcal{E}_{\zeta}^{({\tau_1},{\tau_2})}(c):=
\begin{cases}
-g_{{\zeta},{\tau_1},{\tau_2},c}+(\chi{}(\cdot{},{\zeta})-1)\cdot{}c & \text{on }A_{\zeta}({\tau_1})\cap{}C_{\zeta}({\tau_2})\text{,}\\
-g_{{\zeta},{\tau_1},{\tau_2},c} & \text{on } A_{\zeta}({\tau_1})\setminus{}C_{\zeta}({\tau_2})\text{.}
\end{cases}
\end{align*}
Using Lemma \ref{propertiesonpagem54}, Lemma \ref{passingtoalternativedefinition} and Theorem \ref{finalstuff}, we readily verify that $\mathcal{Z}_{\zeta}^{({\tau_1},{\tau_2})}(c)$ (resp.\ $\mathcal{E}_{\zeta}^{({\tau_1},{\tau_2})}(c)$) is indeed a welldefined element of $\text{HB}({B_{\zeta}({\tau_1})})$ (resp.\ $\text{HB}({A_{\zeta}({\tau_1})})$); so it remains the check Properties \ref{m55m56property1}, \ref{m55m56property2} and \ref{m55m56property3} from the statement of Lemma \ref{lemmaonpagem55andm56}.

Property \ref{m55m56property1} is obvious and $\mathbb{C}$-linearity in Property \ref{m55m56property2} is immediate from Theorem \ref{finalstuff}. In order to establish the existence of the constant $M_3\geq{1}$ satisfying the estimate in Property \ref{m55m56property2}, we note that one easily computes the following:
\begin{align*}
\left\Vert{\mathcal{E}_{\zeta}^{({\tau_1},{\tau_2})}(c)}\right\Vert_{A_{\zeta}({\tau_1})} & \leq\Big({1+n\cdot{}C({{\mathcal{R}({\zeta},{\tau_1})}})\cdot\left\Vert{\overline{\partial}({\chi}({\cdot},{\zeta}))}\right\Vert_{\mathbb{C}^n}}\Big)\cdot\Vert{c}\Vert_{C_{\zeta}({\tau_2})}\text{,}\\
\left\Vert{\mathcal{Z}_{\zeta}^{({\tau_1},{\tau_2})}(c)}\right\Vert_{B_{\zeta}({\tau_1})} & \leq\Big({1+n\cdot{}C({{\mathcal{R}({\zeta},{\tau_1})}})\cdot\left\Vert{\overline{\partial}({\chi}({\cdot},{\zeta}))}\right\Vert_{\mathbb{C}^n}}\Big)\cdot\Vert{c}\Vert_{C_{\zeta}({\tau_2})}\text{,}
\end{align*}
where $C\colon{\mathcal{Q}}\to{}\mathbb{{R}}_{{>0}}$ is as in Theorem \ref{finalstuff}. But $\sup_{\zeta\in\mathcal{P}}{\left\Vert{\overline{\partial}({\chi}({\cdot},{\zeta}))}\right\Vert_{\mathbb{C}^n}}<\infty$ by Lemma \ref{propertiesonpagem54}, $C\colon{\mathcal{Q}}\to{}\mathbb{{R}}_{{>0}}$ is continuous by Theorem \ref{finalstuff}, $\mathcal{R}\colon\mathcal{P}\times[{0,\tau_0}]\to\mathcal{Q}$ is continuous by Lemma \ref{passingtoalternativedefinition} and $\mathcal{P}$ is compact by assumption, so the existence of a constant $M_3$ with the desired property follows.
\theoremstyle{remark}
\newtheorem{remarkinproofofadditivesplitting}[propo]{Remark}
\begin{remarkinproofofadditivesplitting}
\label{remarkinproofofadditivesplitting}
The crucial point here is that, in the notation of Theorem \ref{finalstuff}, the map $C$ only depends on $r\in\mathcal{Q}$ and {\emph{not}} on ${\epsilon}>0$. So, intuitively speaking, even as $\epsilon$ goes to $0$ and the neighborhoods of the closures of the domains get smaller and smaller, the estimates stay the same.
\end{remarkinproofofadditivesplitting}
It remains to prove Property \ref{m55m56property3}. Since $(p,{\zeta})\mapsto\frac{\partial\chi{({\cdot},{\zeta})}}{\partial\overline{z_k}}(p)$ is continuous as a map from $\mathbb{C}^n\times\mathcal{P}$ to $\mathbb{C}$ for all $k$ by Lemma \ref{propertiesonpagem54}, a straight forward calculation involving the Hausdorff distance and Lemma \ref{propertiesonpagem54} shows that the assumptions for applying Property \ref{contdependenceindbarresult} in Theorem \ref{finalstuff} are satisfied. Together with another calculation involving the Hausdorff distance and Lemma \ref{propertiesonpagem54}, this implies Property \ref{m55m56property3}.
\end{proof}

The following lemma is based on \cite[Lemma 8.7.7 on p.\ 363]{FF}. Following the proof given in \cite{FF}, we use the additive splitting obtained from Lemma \ref{lemmaonpagem55andm56} to construct maps which in some sense are ``close'' to giving a compositional splitting. In the proof of Theorem \ref{maintheorem} we will repeatedly apply this (while shrinking the occurring domains in a controlled way) to obtain a compositional splitting in the limit. Continuous dependence on the parameter will be ensured by invoking Lemma \ref{master39} in order to obtain a Lipschitz estimate for a certain inverse map.

\theoremstyle{plain}
\newtheorem{lemmaonpagem82andm83}[propo]{Lemma}
\begin{lemmaonpagem82andm83}
\label{lemmaonpagem82andm83}
Let $\mathcal{P}$ be a nonempty compact topological space, let ${((A_\zeta{},B_\zeta{}))}_{\zeta\in\mathcal{P}}$ be admissible and let $\tau_0$ and $M_3$ be as in Lemma \ref{lemmaonpagem55andm56}. Then there exist constants $M_4,M_5>1$ with the following property:

If $\tau>0$ and $r>0$ satisfy ${\tau}+r\leq\tau_0$ and if $\gamma\colon\{(z,{\zeta})\in\mathbb{C}^n\times\mathcal{P}\colon{z\in{C_{\zeta}({\tau}+r)}}\}\to\mathbb{C}^n$ is a mapping satisfying
\begin{itemize}
\item{$\gamma$ is continuous,}
\item{$\gamma{(\cdot{,}{\zeta})}\colon{C_{\zeta}({\tau}+r)}\to\mathbb{C}^n$ is injective and holomorphic for all $\zeta$,}
\item{$\left\Vert{\gamma{(\cdot{,}{\zeta})}-\text{{\emph{Id}}}}\right\Vert_{C_{\zeta}({\tau}+r)}<r/(16M_4)$ for all $\zeta$,}
\end{itemize}
then there exist mappings
\begin{align*}
\alpha\colon\{(z,{\zeta})\in\mathbb{C}^n\times\mathcal{P}\colon{z\in{A_{\zeta}({\tau}+r/2)}}\} & \to\mathbb{C}^n\text{,}\\
\beta\colon\{(z,{\zeta})\in\mathbb{C}^n\times\mathcal{P}\colon{z\in{B_{\zeta}({\tau}+r/2)}}\} & \to\mathbb{C}^n\text{,}
\end{align*}
such that:
\begin{enumerate}
\item\label{8283continuous}{$\alpha$ and $\beta$ are continuous,}
\item\label{8283injholom}{$\alpha{(\cdot{,}{\zeta})}$ (resp.\ $\beta{(\cdot{,}{\zeta})}$) is injective and holomorphic on $A_{\zeta}({\tau}+r/4)$ (resp.\ $B_{\zeta}({\tau}+r/4)$) for all $\zeta$,}
\item\label{8283estimate}{for all $\zeta\in\mathcal{P}$ we have:
\begin{align*}
\left\Vert{\alpha{(\cdot{,}{\zeta})}-\text{{\emph{Id}}}}\right\Vert_{A_{\zeta}({\tau}+r/2)} & \leq{}M_3\cdot\left\Vert{\gamma{(\cdot{,}{\zeta})}-\text{{\emph{Id}}}}\right\Vert_{C_{\zeta}({\tau}+r)}\text{,}\\
\left\Vert{\beta{(\cdot{,}{\zeta})}-\text{{\emph{Id}}}}\right\Vert_{B_{\zeta}({\tau}+r/2)} & \leq{}M_3\cdot\left\Vert{\gamma{(\cdot{,}{\zeta})}-\text{{\emph{Id}}}}\right\Vert_{C_{\zeta}({\tau}+r)}\text{,}
\end{align*}
}
\item\label{8283compocontinuous}{the mapping $\widetilde{\gamma}\colon\{(z,{\zeta})\in{\mathbb{C}^n\times\mathcal{P}}\colon{}z\in{}C_{\zeta}({\tau}+r/8)\}\to\mathbb{C}^n$ given by
\begin{align*}
(z,{\zeta})\mapsto\left({{\left({\beta ({\cdot},{\zeta})_{\big|{B_{\zeta}({\tau}+r/4)}}}\right)}^{-1}\circ{\gamma ({\cdot},{\zeta})}\circ{\alpha ({\cdot},{\zeta})}}\right){(z)}
\end{align*}
is welldefined and continuous and, for all $\zeta$, the map $\widetilde{\gamma}({\cdot},{\zeta})\colon{C_{\zeta}({\tau}+r/8)}\to\mathbb{C}^n$ is injective, holomorphic and satisfies
\begin{align*}
\Vert{\widetilde{\gamma}({\cdot},{\zeta})-\text{{\emph{Id}}}}\Vert_{{C_\zeta}({\tau{}+r/8})}\leq{{(M_5/r)}}\cdot\Vert{{\gamma}({\cdot},{\zeta})-\text{{\emph{Id}}}}\Vert^2_{{C_\zeta}({\tau +r})}\text{.}
\end{align*}
}
\end{enumerate}
\end{lemmaonpagem82andm83}

\begin{proof}
Let $\tau_0$ and $M_3$ be as in Lemma \ref{lemmaonpagem55andm56}, let $K$ be as in Lemma \ref{DDD}, let $M_2$ be as in Lemma \ref{master311} and set $M_4:=2\cdot\max{}\left\{2^{11}M_3,\frac{M_3}{4K}\right\}>1$ and $M_5:=32M_2(M_3)^2>1$. We have to show that $M_4$ and $M_5$ have the desired property.

To this end let ${\tau}$, $r$ and $\gamma$ be as in the statement of Lemma \ref{lemmaonpagem82andm83}. Define $c\colon\{(z,{\zeta})\in\mathbb{C}^n\times\mathcal{P}\colon{z\in{C_{\zeta}({\tau}+r)}}\}\to\mathbb{C}^n$, $(z,{\zeta})\mapsto{\gamma}(z,{\zeta})-z$, i.e.\ $c({\cdot},{\zeta})={\gamma}({\cdot},{\zeta})-\text{Id}$ for all $\zeta$.

Applying Lemma \ref{lemmaonpagem55andm56} we define
\begin{align*}
a\colon\{(z,{\zeta})\in\mathbb{C}^n\times\mathcal{P}\colon{z\in{A_{\zeta}({\tau}+r/2)}}\} & \to\mathbb{C}^n\text{, }(z,{\zeta})\mapsto{\left({\mathcal{E}_{\zeta}^{({\tau}+r/2,{\tau}+r)}(c({\cdot},{\zeta}))}\right)}(z)\text{,}\\
b\colon\{(z,{\zeta})\in\mathbb{C}^n\times\mathcal{P}\colon{z\in{B_{\zeta}({\tau}+r/2)}}\} & \to\mathbb{C}^n\text{, }(z,{\zeta})\mapsto{\left({\mathcal{Z}_{\zeta}^{({\tau}+r/2,{\tau}+r)}(c({\cdot},{\zeta}))}\right)}(z)\text{,}
\end{align*}
and
\begin{align*}
\alpha\colon\{(z,{\zeta})\in\mathbb{C}^n\times\mathcal{P}\colon{z\in{A_{\zeta}({\tau}+r/2)}}\} & \to\mathbb{C}^n\text{, }(z,{\zeta})\mapsto{a}(z,{\zeta})+z\text{,}\\
\beta\colon\{(z,{\zeta})\in\mathbb{C}^n\times\mathcal{P}\colon{z\in{B_{\zeta}({\tau}+r/2)}}\} & \to\mathbb{C}^n\text{, }(z,{\zeta})\mapsto{b}(z,{\zeta})+z\text{.}
\end{align*}
We have to verify Properties \ref{8283continuous}, \ref{8283injholom}, \ref{8283estimate} and \ref{8283compocontinuous} from the statement of Lemma \ref{lemmaonpagem82andm83}.

Properties \ref{8283continuous} and \ref{8283estimate} are immediate from Lemma \ref{lemmaonpagem55andm56} and Property \ref{8283injholom} follows from Lemma \ref{DDD} by choice of $M_4$; so we only have to show Property \ref{8283compocontinuous}.

We will first show that, for all $\zeta$, the map $\widetilde{\gamma}({\cdot},{\zeta})$ is welldefined, injective, holomorphic and satisfies the claimed estimate (which obviously implies that $\widetilde{\gamma}$ is welldefined):\\
If, for fixed $\zeta\in\mathcal{P}$, we have ${\gamma}({\cdot},{\zeta})\not\equiv\text{Id}$, then this follows from an application of Lemma \ref{master311} (with $C_{\zeta}({\tau}+r/8)$ in the role of $V$ and $2M_3\cdot\left\Vert{\gamma{(\cdot{,}{\zeta})}-\text{{{Id}}}}\right\Vert_{C_{\zeta}({\tau}+r)}$ in the role of $\epsilon$ and $r/8$ in the role of $\delta$) by choice of $M_4$ and $M_5$.\\
If, however, ${\gamma}({\cdot},{\zeta})\equiv\text{Id}$ then the estimates imply that ${\alpha}({\cdot},{\zeta})$ and ${\beta}({\cdot},{\zeta})$ are also $\equiv\text{Id}$ on their respective domains and hence the claimed properties are obvious.

It remains to show that $\widetilde{\gamma}$ is continuous. We define the following sets:
\begin{align*}
H_0 & :=\{(z,\zeta{})\in\mathbb{C}^n\times\mathcal{P}\colon{}z\in{}C_{\zeta}({\tau}+r/8)\}\text{,}\\
H_1 & :=\{(z,\zeta{},\zeta{'},\zeta{''})\in\mathbb{C}^n\times\mathcal{P}\times\mathcal{P}\times\mathcal{P}\colon{}(z,{\zeta})\in{H_0},\\
& \phantom{:=\{}\alpha{}(z,{\zeta})\in{}C_{\zeta'}({\tau}+r),\gamma{}(\alpha{}(z,{\zeta}),\zeta{'})\in{}C_{\zeta{''}}({\tau}+r/8+r/2^{14})\}\text{,}\\
H_2 & :=\{(\widetilde{z},\zeta{'},\zeta{''})\in\mathbb{C}^n\times\mathcal{P}\times\mathcal{P}\colon{}\widetilde{z}\in{}C_{\zeta'}({\tau}+r),\\
& \phantom{:=\{}\gamma{}(\widetilde{z},\zeta{'})\in{}C_{\zeta{''}}({\tau}+r/8+r/2^{14})\}\text{,}\\
H_3 & :=\{(\widehat{z},\zeta{''})\in\mathbb{C}^n\times\mathcal{P}\colon{}\widehat{z}\in{}C_{\zeta{''}}({\tau}+r/8+r/2^{14})\}\text{.}
\end{align*}
We of course assume all of them to be equipped with the respective subspace topologies. We define maps
\begin{align*}
\phi_0\colon{}H_0 & \to{}\mathbb{C}^n\times\mathcal{P}\times\mathcal{P}\times\mathcal{P}\text{,} & (z,{\zeta}) & \xmapsto{\phi_0}(z,\zeta,\zeta,\zeta)\text{,}\\
\phi_1\colon{}H_1 & \to{}\mathbb{C}^n\times\mathcal{P}\times\mathcal{P}\text{,} & (z,{\zeta},\zeta{'},\zeta{''}) & \xmapsto{\phi_1}(\alpha{}(z,{\zeta}),\zeta{'},\zeta{''})\text{,}\\
\phi_2\colon{}H_2 & \to{}\mathbb{C}^n\times\mathcal{P}\text{,} & (\widetilde{z},{\zeta}',\zeta{''}) & \xmapsto{\phi_2}(\gamma{}(\widetilde{z},{\zeta}'),\zeta{''})\text{,}\\
\lambda\colon{}H_3 & \to{}\mathbb{C}^n & (\widehat{z},\zeta{''}) & \xmapsto{\lambda} \left({\beta{}(\cdot{},{\zeta}'')}_{\big|{B_{{\zeta}''}({\tau}+r/4)}}\right){}^{-1}(\widehat{z})\text{.}
\end{align*}
From our estimates and Lemma \ref{master312} it follows that the occurring maps are welldefined and that $\phi_0{}(H_0)\subseteq{}H_1$, $\phi_1{}(H_1)\subseteq{}H_2$ and $\phi_2{}(H_2)\subseteq{}H_3$; hence the map $\lambda\circ\phi_2\circ\phi_1\circ\phi_0\colon{}H_0\to\mathbb{C}^n$ is welldefined. By direct computation one readily verifies that $\widetilde{\gamma}=\lambda\circ\phi_2\circ\phi_1\circ\phi_0$. So, since $\phi_0$, $\phi_1$ and $\phi_2$ are continuous, it suffices to show that $\lambda$ is continuous.

To this end let $(z_0,{\zeta}_0)\in{}H_3$ and ${\epsilon}>0$. The set $U_1:=H_3\cap\left({\left({{\beta}({\cdot},{\zeta_0})(B_{\zeta_0}({\tau}+r/4))}\right)\times\mathcal{P}}\right)$ is an open neighborhood of $(z_0,{\zeta_0})$ in $H_3$. Let $\widetilde{\beta}$ be the restriction of $\beta$ to $\{(z,{\zeta})\in\mathbb{C}^n\times\mathcal{P}\colon{z\in{B_{\zeta}({\tau}+r/4)}}\}$. Since $({\widetilde{\beta}({\cdot},{\zeta_0})})^{-1}$ is a biholomorphism, we can find an open neighborhood $U_2$ of $({z_0},{\zeta_0})$ in $U_1$, such that for $(z,{\zeta})\in{U_2}$ we can write
\begin{align*}
\Vert{{\lambda}(z,{\zeta})-{\lambda}({z_0},{\zeta_0})}\Vert{}<\frac{\epsilon}{2}+\Vert{{({\widetilde{\beta}({\cdot},{\zeta})})^{-1}(z)}-{({\widetilde{\beta}({\cdot},{\zeta_0})})^{-1}(z)}}\Vert\text{.}
\end{align*}
By choice of $M_4$, the explicit description of $H_3$ and our distance estimates, the map $h\colon{U_2}\to\mathbb{C}^n\times\mathcal{P}$, $(z,{\zeta})\mapsto{}({({\widetilde{\beta}({\cdot},{\zeta_0})})^{-1}(z)},{\zeta})$ is welldefined and continuous and $h(U_2)$ is contained in the set where $\widetilde{\beta}$ is defined. Hence the map $\mathcal{L}:=\widetilde{\beta}\circ{h}\colon{U_2}\to\mathbb{C}^n$ is welldefined and continuous.

For $(z,{\zeta})\in{U_2}$, using the distance estimates and Lemma \ref{master312}, one verifies that it is possible to apply Lemma \ref{master39} (with $B_{\zeta}({\tau}+r/4-r/2^{16})$ in the role of $V$ and $r\cdot{}(1/4-1/8-1/2^{13}-1/2^{16})$ in the role of $d$ and $(z,\mathcal{L}(z,{\zeta}))$ in the role of $(x,y)$ and $({\widetilde{\beta}({\cdot},{\zeta})})^{-1}-\text{Id}$ in the role of $F$). We compute, using the distance estimates to ensure welldefinedness in each step:
\begin{align*}
& \Vert{{({\widetilde{\beta}({\cdot},{\zeta})})^{-1}(z)}-{({\widetilde{\beta}({\cdot},{\zeta_0})})^{-1}(z)}}\Vert\\
= & \Vert{{({\widetilde{\beta}({\cdot},{\zeta})})^{-1}(z)}-{({\widetilde{\beta}({\cdot},{\zeta})})^{-1}(\mathcal{L}(z,{\zeta}))}}\Vert\\
\leq & \left\Vert{\left({({\widetilde{\beta}({\cdot},{\zeta})})^{-1}-\text{Id}}\right)(\mathcal{L}(z,{\zeta}))-\left({({\widetilde{\beta}({\cdot},{\zeta})})^{-1}-\text{Id}}\right)(z)}\right\Vert\\
& +\Vert{\mathcal{L}(z,{\zeta})-z}\Vert\\
\leq & \left(\text{const}_n\cdot\frac{r/2^{16}}{r\cdot{}(1/4-1/8-1/2^{13}-1/2^{16})}+1\right)\cdot\Vert{\mathcal{L}(z,{\zeta})-z}\Vert\text{,}
\end{align*}
where $\text{const}_n$ is as in Lemma \ref{master39}. So, since $\mathcal{L}$ is continuous and since $\Vert{\mathcal{L}(z,{\zeta})-z}\Vert\leq\Vert{\mathcal{L}(z,{\zeta})-z_0}\Vert{+}\Vert{z-z_0}\Vert{=}\Vert{\mathcal{L}(z,{\zeta})-\mathcal{L}({z_0},{\zeta_0})}\Vert{+}\Vert{z-z_0}\Vert$, we find an open neighborhood $U_3$ of $({z_0},{\zeta_0})$ in $U_2$, such that $\Vert{{\lambda}(z,{\zeta})-{\lambda}({z_0},{\zeta_0})}\Vert{}<\epsilon$, whenever $(z,{\zeta})\in{U_3}$. Hence $\lambda$ is continuous.
\end{proof}

\theoremstyle{remark}
\newtheorem{remarkthatneedholomlipschitz}[propo]{Remark}
\begin{remarkthatneedholomlipschitz}
\label{remarkthatneedholomlipschitz}
When showing continuity of $\widetilde{\gamma}$ in the proof of Lemma \ref{lemmaonpagem82andm83}, we applied Lemma \ref{master39} to obtain a Lipschitz estimate. This relies heavily on the holomorphicity of the occurring maps.
\end{remarkthatneedholomlipschitz}

\section{Proof of the Main Result}
\label{proofsection}

This section is devoted to proving Theorem \ref{maintheorem}. To this end let $\mathcal{P}$ be a nonempty compact topological space, let ${((A_\zeta{},B_\zeta{}))}_{\zeta\in\mathcal{P}}$ be admissible and let $\mu>0$. We follow the idea of Forstneri\v{c}'s original proof, adapt it to our situation and ensure continuous dependence on the parameter along the way.

Let $\tau_0$ be as in the proof of Lemma \ref{lemmaonpagem55andm56} and let $\tau>0$ be fixed with $5\tau\leq{}\mu$ and $5\tau\leq{}\tau_0$. Let $M_3$ be as in Lemma \ref{lemmaonpagem55andm56}, let $K$ be as in Lemma \ref{DDD}, let $M_2$ be as in Lemma \ref{master311} and let $M_4=2\cdot\max{}\left\{2^{11}M_3,\frac{M_3}{4K}\right\}$ and $M_5=32M_2(M_3)^2$ as in the proof of Lemma \ref{lemmaonpagem82andm83}. Let $R_0=\frac{1}{4}\cdot\min{\left\{1,\frac{\tau}{2},K\cdot{}\frac{{\tau}}{4}\right\}}$ and let $\rho\colon{\mathbb{R}_{>0}\times\mathbb{R}_{\geq{1}}\times\mathbb{R}_{\geq{1}}}\to\mathbb{R}_{>0}$ be as in Lemma \ref{GEHTJ}. For $\eta\in\mathbb{R}_{>0}$ we define
\begin{align*}
\epsilon{}_{\eta}:=\frac{1}{2}\cdot\min{\{1,{\rho}(R_0,M_4,M_5),{\rho}({\eta},M_4,M_5)\}}\text{.}
\end{align*}
We have to check that $\tau$ and $(\epsilon{}_{\eta})_{\eta\in\mathbb{R}_{>0}}$ have the desired property. To this end let $\eta>0$ and let $({\gamma_\zeta})_{\zeta\in\mathcal{P}}$ be a family of injective holomorphic maps $\gamma_\zeta\colon{C_{\zeta}}({\mu})\to\mathbb{C}^n$ satisfying
\begin{itemize}
\item{$\Vert{\gamma_\zeta-\text{{Id}}}\Vert_{{C_\zeta}({\mu})}<\epsilon_\eta$ for all $\zeta\in\mathcal{P}$,}
\item{$({\gamma_\zeta})_{\zeta\in\mathcal{P}}$ depends continuously on $\zeta\in\mathcal{P}$ in the sense of Definition \ref{contvaryingfunctionsmapsforms}.}
\end{itemize}

Let $R_m=R_0/8^m$ for all positive integers $m$, define $\gamma^{(0)}\colon\{(z,{\zeta})\in{\mathbb{C}^n\times\mathcal{P}}\colon{}z\in{}C_{\zeta}(4{\tau}+R_0)\}\to\mathbb{C}^n$, $(z,{\zeta})\mapsto\gamma_\zeta (z)$ and, for all $\zeta$, let $\epsilon_{0,\zeta}:=\Vert{{\gamma^{(0)}}({\cdot},{\zeta})-\text{{Id}}}\Vert_{{C_\zeta}({4\tau +R_0})}$.

We continue our construction inductively; let $m'\in\mathbb{Z}_{\geq{0}}$ and assume we have already constructed a family $({\gamma}^{(k)})_{k\in\{0,1,\dots{},m'\}}$ of continuous maps ${{\gamma}^{(k)}}\colon{}\{(z,\zeta{})\in\mathbb{C}^n\times\mathcal{P}\colon{}z\in{}C_{\zeta}(4{\tau}+R_k)\}\to\mathbb{C}^n$ and a family $({\epsilon}_{k,{\zeta}})_{k\in\{0,1,\dots{},m'\},\zeta\in\mathcal{P}}$ of non-negative real numbers with the following properties:
\begin{itemize}
\item{For all $\zeta\in\mathcal{P},k\in\mathbb{Z}_{\geq{0}}\cap\mathbb{Z}_{\leq{m'}}$ the map ${\gamma^{(k)}}(\cdot{},{\zeta})\colon{}C_{\zeta}(4{\tau}+R_k)\to\mathbb{C}^n$ is injective and holomorphic.}
\item{For all $\zeta\in\mathcal{P},k\in\mathbb{Z}_{\geq{0}}\cap\mathbb{Z}_{\leq{m'}}$ we have ${\epsilon}_{k,{\zeta}}=\Vert{{\gamma^{(k)}}({\cdot},{\zeta})-\text{{Id}}}\Vert_{{C_\zeta}({4\tau +R_k})}$.}
\item{For all $\zeta\in\mathcal{P},k\in\mathbb{Z}_{\geq{1}}\cap\mathbb{Z}_{\leq{m'}}$ we have $\epsilon_{k,\zeta}\leq{}M_5\cdot{}\frac{1}{R_{k-1}}\cdot{}\epsilon_{k-1,\zeta}^2$.}
\item{For all $\zeta\in\mathcal{P},k\in\mathbb{Z}_{\geq{1}}\cap\mathbb{Z}_{\leq{m'}}$ we have $\epsilon_{k-1,\zeta}<\frac{R_{k-1}}{16M_4}$.}
\end{itemize}
We have $1/R_k=2^{3k}/R_0$ for all non-negative integers $k$ and
\begin{align*}
{\epsilon}_{0,\zeta}<{\epsilon}_{\eta}<\rho{}(R_0,M_4,M_5)
\end{align*}
for all $\zeta\in\mathcal{P}$, so for each $\zeta$ we can apply Lemma \ref{GEHTJ} to the sequence
\begin{align*}
({\epsilon}_{0,\zeta},{\epsilon}_{1,\zeta},\dots{},{\epsilon}_{m',\zeta},0,0,\dots{})
\end{align*}
and obtain $16M_4\epsilon_{m',\zeta}<R_0/2^{3m'}=R_{m'}$, i.e.\ we have
\begin{align*}
\epsilon_{m',\zeta}=\Vert{{\gamma^{(m')}}({\cdot},{\zeta})-\text{{Id}}}\Vert_{{C_\zeta}({4\tau +R_{m'}})}<\frac{R_{m'}}{16M_4}\text{.}
\end{align*}
Hence we can apply Lemma \ref{lemmaonpagem82andm83} to obtain continuous maps
\begin{align*}
\alpha^{(m')}\colon\{(z,{\zeta})\in{\mathbb{C}^n\times\mathcal{P}}\colon{}z\in{}A_{\zeta}(4{\tau}+{R_{m'}}/2)\} & \to\mathbb{C}^n\text{,}\\
\beta^{(m')}\colon\{(z,{\zeta})\in{\mathbb{C}^n\times\mathcal{P}}\colon{}z\in{}B_{\zeta}(4{\tau}+{R_{m'}}/2)\} & \to\mathbb{C}^n\text{,}
\end{align*}
such that
\begin{itemize}
\item{${\alpha}^{(m')}({\cdot},{\zeta})$ (resp.\ ${\beta}^{(m')}({\cdot},{\zeta})$) is injective and holomorphic on $A_{\zeta}(4{\tau}+{R_{m'}}/4)$ (resp.\ $B_{\zeta}(4{\tau}+{R_{m'}}/4)$) for all $\zeta$,}
\item{$\Vert{{\alpha^{(m')}}({\cdot},{\zeta})-\text{{Id}}}\Vert_{{A_\zeta}({4\tau +{R_{m'}}/2})}\leq{}M_3\cdot{}\Vert{{\gamma^{(m')}}({\cdot},{\zeta})-\text{{Id}}}\Vert_{{C_\zeta}({4\tau +R_{m'}})}$ for all $\zeta$,}
\item{ $\Vert{{\beta^{(m')}}({\cdot},{\zeta})-\text{{Id}}}\Vert_{{B_\zeta}({4\tau +{R_{m'}}/2})}\leq{}M_3\cdot{}\Vert{{\gamma^{(m')}}({\cdot},{\zeta})-\text{{Id}}}\Vert_{{C_\zeta}({4\tau +R_{m'}})}$ for all $\zeta$,}
\item{the mapping $\gamma^{(m'+1)}\colon\{(z,{\zeta})\in{\mathbb{C}^n\times\mathcal{P}}\colon{}z\in{}C_{\zeta}(4{\tau}+R_{m'+1})\}\to\mathbb{C}^n$ given by $(z,{\zeta})\mapsto\left({{\left({\beta^{(m')}({\cdot},{\zeta})_{\big|{B_{\zeta}(4{\tau}+{R_{m'}}/4)}}}\right)}^{-1}\circ{\gamma^{(m')}({\cdot},{\zeta})}\circ{\alpha^{(m')}({\cdot},{\zeta})}}\right){(z)}$ is welldefined and continuous and, for all $\zeta$, the map $\gamma{}^{(m'+1)}({\cdot},{\zeta})\colon{C_{\zeta}({4\tau}+R_{m'+1})}\to\mathbb{C}^n$ is injective, holomorphic and satisfies
\begin{align*}
\Vert{{\gamma^{(m'+1)}}({\cdot},{\zeta})-\text{{Id}}}\Vert_{{C_\zeta}({4\tau{}+R_{m'+1}})}\leq{{(M_5/R_{m'})}}\cdot\Vert{{\gamma^{(m')}}({\cdot},{\zeta})-\text{{Id}}}\Vert^2_{{C_\zeta}({4\tau +R_{m'}})}\text{.}
\end{align*}
}
\end{itemize}
Defining $\epsilon_{m'+1,\zeta}:=\Vert{{\gamma^{(m'+1)}}({\cdot},{\zeta})-\text{{Id}}}\Vert_{{C_\zeta}({4\tau{}+R_{m'+1}})}$ for all $\zeta$, the last inequality reads
\begin{align*}
\epsilon_{m'+1,\zeta}\leq\frac{M_5}{R_{m'}}\cdot{\epsilon^{2}_{m',\zeta}}\text{,}
\end{align*}
which completes the induction. It should be noted that over the course of the construction of the sequence $({\gamma}^{(m)})_{m\in\mathbb{Z}_{\geq{0}}}$, we have also constructed sequences of continuous maps $({\alpha}^{(m)})_{m\in\mathbb{Z}_{\geq{0}}}$ and $({\beta}^{(m)})_{m\in\mathbb{Z}_{\geq{0}}}$. It is clear from the construction that
\begin{equation}
\begin{aligned}
\Vert{{\gamma^{(m)}}({\cdot},{\zeta})-\text{{Id}}}\Vert_{{C_\zeta}({4\tau{}+R_{m}})} & <{R_m}/32\text{,}\\
\Vert{{\alpha^{(m)}}({\cdot},{\zeta})-\text{{Id}}}\Vert_{{A_\zeta}({4\tau{}+R_{m}/2})} & <{R_m}/32\text{,}\\
\Vert{{\beta^{(m)}}({\cdot},{\zeta})-\text{{Id}}}\Vert_{{B_\zeta}({4\tau{}+R_{m}/2})} & <{R_m}/32\text{.}
\end{aligned}
\tag{DE}
\label{eqnDE}
\end{equation}
For all $\zeta\in\mathcal{P},m\in\mathbb{Z}_{\geq{0}}$ we define the map
\begin{align*}
\widetilde{\alpha}_m^{(\zeta)}\colon{}A_{\zeta}(4{\tau}+R_m/4)\to\mathbb{C}^n
\end{align*}
by:
\begin{align*}
z\mapsto{}\Big({{\alpha^{(0)}}(\cdot{},{\zeta})\circ\dotsc\circ{\alpha^{(m)}}(\cdot{},{\zeta})}\Big)(z)\text{.}
\end{align*}
It is welldefined (because of the distance estimates \ref{eqnDE}), injective and holomorphic. By the distance estimates \ref{eqnDE}, the choice of $R_0$ and by Lemma \ref{master312}, the inverse map $({\widetilde{\alpha}_m^{(\zeta)}})^{-1}$ is welldefined on $A_{\zeta}(7{\tau}/2)$ for all $m$. Using the distance estimates \ref{eqnDE} yet again, one readily computes that $\left(\big({\widetilde{\alpha}_m^{({\zeta})}}\big)^{-1}\right)_{m\in\mathbb{Z}_{\geq{0}}}$ is a Cauchy sequence with respect to the sup norm on ${A_{\zeta}(7{\tau}/2)}$ and hence converges uniformly to a holomorphic map
\begin{align*}
\alpha_{-1,{\zeta}}\colon{}{A_{\zeta}(7{\tau}/2)}\to\mathbb{C}^n\text{.}
\end{align*}
Noting that $\Vert{{\alpha_{-1,{\zeta}}}-\text{{Id}}}\Vert_{{A_\zeta}(7\tau{}/2)}<R_0/16<K\cdot{\tau}/4$, we can apply Lemma \ref{DDD} to deduce that the restriction of $\alpha_{-1,{\zeta}}$ to $A_{\zeta}(13{\tau}/4)$ is injective. By Lemma \ref{master312} the range of ${\alpha_{-1,{\zeta}}}_{\big|{A_{\zeta}(13{\tau}/4)}}$ contains $A_{\zeta}(51{\tau}/16)$, so that the map
\begin{align*}
\hat{\alpha}_{\zeta}\colon{}A_{\zeta}(51{\tau}/16)\to\mathbb{C}^n
\end{align*}
defined by
\begin{align*}
z\mapsto\left({{\alpha_{-1,{\zeta}}}_{\big|{A_{\zeta}(13{\tau}/4)}}}\right)^{-1}(z)
\end{align*}
is welldefined, injective and holomorphic. Analogously we obtain maps
\begin{align*}
\widetilde{\beta}_m^{(\zeta)} & \colon{}B_{\zeta}(4{\tau}+R_m/4)\to\mathbb{C}^n\text{ for all }m\in\mathbb{Z}_{\geq{0}}\text{,}\\
\beta_{-1,{\zeta}} & \colon{}{B_{\zeta}(7{\tau}/2)}\to\mathbb{C}^n\text{,}\\
\hat{\beta}_{\zeta{}} & \colon{}B_{\zeta}(51{\tau}/16)\to\mathbb{C}^n\text{,}
\end{align*}
with analogous properties. We define
\begin{align*}
\alpha_{\zeta} & :={{{}\hat{\alpha}_{\zeta}}}_{\big|{A_{\zeta}(2{\tau})}}\text{,}\\
\beta_{\zeta} & :={{{}\hat{\beta}_{\zeta}}}_{\big|{B_{\zeta}(2{\tau})}}\text{.}
\end{align*}
So we have constructed families $({\alpha_\zeta})_{\zeta\in\mathcal{P}}$ and $({\beta_\zeta})_{\zeta\in\mathcal{P}}$ of injective holomorphic maps. We have to show that they have the desired properties.

Welldefinedness of ${\beta_\zeta}\circ{{\alpha_\zeta}^{-1}}$ on $C_{\zeta}({\tau})$ and the compositional splitting $\gamma_{\zeta}={\beta_\zeta}\circ{{\alpha_\zeta}^{-1}}$ on $C_{\zeta}({\tau})$ follow from our distance estimates \ref{eqnDE} and two applications of Lemma \ref{master321}.

The estimates $\Vert{\alpha_\zeta-\text{Id}}\Vert_{{A_\zeta}({2\tau})}<\eta$ and $\Vert{\beta_\zeta-\text{Id}}\Vert_{{B_\zeta}({2\tau})}<\eta$ follow from the distance estimates \ref{eqnDE} and an application of Lemma \ref{GEHTJ} to the sequence $(\epsilon_{m,{\zeta}})_{m\in\mathbb{Z}_{\geq{0}}}$ and the triple $({\eta}',M_4,M_5)$, where $\eta{}'=\min{\{{\eta},R_0\}}$.

It remains to check that $({\alpha_\zeta})_{\zeta\in\mathcal{P}}$ and $({\beta_\zeta})_{\zeta\in\mathcal{P}}$ depend continuously on $\zeta\in\mathcal{P}$ in the sense of Definition \ref{contvaryingfunctionsmapsforms}. To this end define
\begin{align*}
\alpha\colon{}\{(z,\zeta{})\in{\mathbb{C}^n\times\mathcal{P}}\colon{}z\in{A_{\zeta}(2{\tau})}\}\to{\mathbb{C}^n}, (z,\zeta{})\mapsto\alpha_{\zeta}(z)\text{,}\\
\beta\colon{}\{(z,\zeta{})\in{\mathbb{C}^n\times\mathcal{P}}\colon{}z\in{B_{\zeta}(2{\tau})}\}\to{\mathbb{C}^n}, (z,\zeta{})\mapsto\beta_{\zeta}(z)\text{.}
\end{align*}
We will show that $\alpha$ is continuous; continuity of $\beta$ follows analogously. For all $m\in\mathbb{Z}_{\geq{0}}$ we define the map
\begin{align*}
\widetilde{\alpha}_m\colon\{(z,\zeta{})\in{\mathbb{C}^n\times\mathcal{P}}\colon{}z\in{A_{\zeta}(4{\tau}+R_m/4)}\}\to{\mathbb{C}^n}\text{,}
\end{align*}
by\begin{align*}
(z,{\zeta})\mapsto\widetilde{\alpha}_m^{({\zeta})}(z)\text{.}
\end{align*}
With an inductive argument analogous to the proof of Lemma \ref{lemmaonpagem82andm83} one readily verifies that $\widetilde{\alpha}_m$ is continuous for all $m$. If one can show that $(\widetilde{\alpha}_m)_{m\in\mathbb{Z}_{\geq{0}}}$ converges to $\alpha$ uniformly on $\{(z,\zeta{})\in{\mathbb{C}^n\times\mathcal{P}}\colon{}z\in{A_{\zeta}(2{\tau})}\}$, then continuity of $\alpha$ will follow from the uniform limit theorem.

Analogously to the proof of Lemma \ref{lemmaonpagem82andm83} we find an $L>0$ (independent from $\zeta\in\mathcal{P}$) such that for all $\zeta\in\mathcal{P}$ and for all $x,y\in\mathbb{C}^n$ with $\{lx+(1-l)y\colon{}l\in{}[0,1]\}\subseteq{}A_{\zeta}(25{\tau}/8)$ we have:
\begin{align*}
\left\Vert{\hat{\alpha}_{\zeta}(y)-\hat{\alpha}_{\zeta}(x)}\right\Vert\leq{}L\cdot\Vert{y-x}\Vert\text{.}
\end{align*}

\theoremstyle{remark}
\newtheorem{lipschitzmainthmproof}[propo]{Remark}
\begin{lipschitzmainthmproof}
\label{lipschitzmainthmproof}
We are heavily relying on {\emph{holomorphicity}} to establish this Lipschitz estimate, which is essential for proving that $\alpha$ is continuous.
\end{lipschitzmainthmproof}

Using the distance estimates we deduce that the map $\alpha_{-1,{\zeta}}\circ\widetilde{\alpha}_m^{({\zeta})}$ is welldefined on $A_{\zeta}(2{\tau})$ and that
\begin{align*}
\left\Vert{\alpha_{\zeta}-\widetilde{\alpha}_m^{({\zeta})}}\right\Vert_{A_{\zeta}(2{\tau})}\leq{}{L}\cdot\left\Vert{\text{Id}-(\alpha_{-1,{\zeta}}\circ\widetilde{\alpha}_m^{({\zeta})})}\right\Vert_{A_{\zeta}(2{\tau})}\text{ for all }m\in\mathbb{Z}_{\geq{0}},\zeta\in\mathcal{P}\text{.}
\end{align*}

Now we are ready to show that $(\widetilde{\alpha}_m)_{m\in\mathbb{Z}_{\geq{0}}}$ converges to $\alpha$ uniformly on $\mathcal{D}:=\{(z,\zeta{})\in{\mathbb{C}^n\times\mathcal{P}}\colon{}z\in{A_{\zeta}(2{\tau})}\}$. We compute, noting that all occurring compositions are welldefined on the respective sets:
\begingroup
\allowdisplaybreaks
\begin{align*}
& \sup\limits_{(z,{\zeta})\in\mathcal{D}}{\left\Vert{\widetilde{\alpha}_m(z,{\zeta})-\alpha{}(z,{\zeta})}\right\Vert}\\
\leq & \sup_{\zeta\in\mathcal{P}}{\left\Vert{\alpha_{\zeta}-\widetilde{\alpha}_m^{({\zeta})}}\right\Vert_{A_{\zeta}(2{\tau})}}\\
\leq & {L}\cdot\sup_{\zeta\in\mathcal{P}}{\left\Vert{\text{Id}-(\alpha_{-1,{\zeta}}\circ\widetilde{\alpha}_m^{({\zeta})})}\right\Vert_{A_{\zeta}(2{\tau})}}\\
\leq & {L}\cdot\sup\limits_{\zeta\in\mathcal{P}}{\bigg(\limsup\limits_{l>m,l\rightarrow\infty}\bigg(\left\Vert{\alpha_{-1,{\zeta}}\circ\widetilde{\alpha}_m^{({\zeta})}-{\big({\widetilde{\alpha}_l^{({\zeta})}}\big)^{-1}\circ\widetilde{\alpha}_m^{({\zeta})}}}\right\Vert_{A_{\zeta}(2{\tau})}}\\
& \phantom{\cdot\sup\limits_{\zeta\in\mathcal{P}}{\bigg(\limsup\limits_{l>m,l\rightarrow\infty}\bigg(}ll}{+\left\Vert{{\big({\widetilde{\alpha}_l^{({\zeta})}}\big)^{-1}\circ\widetilde{\alpha}_m^{({\zeta})}}-\text{Id}}\right\Vert_{A_{\zeta}(2{\tau})}\bigg)\bigg)}\\
\leq & {L}\cdot\sup\limits_{\zeta\in\mathcal{P}}{\bigg(\limsup\limits_{l>m,l\rightarrow\infty}\bigg(\left\Vert{\alpha_{-1,{\zeta}}-{\big({\widetilde{\alpha}_l^{({\zeta})}}\big)^{-1}}}\right\Vert_{\widetilde{\alpha}_m^{({\zeta})}(A_{\zeta}(2{\tau}))}\bigg)}\\
& \phantom{{L}\cdot\sup\limits_{\zeta\in\mathcal{P}}{\bigg(}ll}{+\limsup\limits_{l>m,l\rightarrow\infty}\bigg({\left\Vert{{\big({\widetilde{\alpha}_l^{({\zeta})}}\big)^{-1}}-{\big({\widetilde{\alpha}_m^{({\zeta})}}\big)^{-1}}}\right\Vert_{\widetilde{\alpha}_m^{({\zeta})}(A_{\zeta}(2{\tau}))}}\bigg)\bigg)}\text{,}\\
\end{align*}%
\endgroup
which goes to $0$ as $m$ goes to $\infty$. This concludes the proof of Theorem \ref{maintheorem}.

\bibliographystyle{amsplain}
\bibliography{refs}

\end{document}